\titleformat*{\section}{\large\bfseries}
\titleformat*{\subsection}{\normalsize\bfseries}
\titleformat*{\subsubsection}{\normalsize\em}
\newtheorem{theorem}{Theorem}[section]
\newtheorem{lemma}[theorem]{Lemma}
\newtheorem{corollary}[theorem]{Corollary}
\theoremstyle{remark}
\newtheorem{definition}[theorem]{Definition}
\newtheorem{remark}[theorem]{Remark}
\let\norm\undefined 
\DeclarePairedDelimiter\norm{\lVert}{\rVert}
\newcommand{\RR}{\ensuremath{\mathbb{R}}}
\newcommand{\NN}{\ensuremath{\mathbb N}}
\newcommand{\ceil}[1]{\lceil {#1} \rceil}
\newcommand{\pair}[1]{\langle {#1} \rangle}
\title{Rates of convergence for iterative solutions of equations involving set-valued accretive operators}
\author{Ulrich Kohlenbach and Thomas Powell\\[0.2cm] 
Department of Mathematics \\
Technische Universit\"at Darmstadt\\ 
Schlossgartenstra\ss{}e 7\\ 64289 Darmstadt, Germany \\ 
\footnotesize Email: $\{$kohlenbach,powell$\}$@mathematik.tu-darmstadt.de}
\date{April 22, 2020}
\begin{document}
\maketitle

\begin{abstract}
This paper studies proofs of strong convergence of various iterative algorithms for computing the unique zeros of set-valued accretive operators that also satisfy some weak form of uniform accretivity at zero. More precisely, we extract explicit rates of convergence from these proofs which depend on a \emph{modulus} of uniform accretivity at zero, a concept first introduced by A. Koutsoukou-Argyraki and the first author in 2015. Our highly modular approach, which is inspired by the logic-based proof mining paradigm, also establishes that a number 
of seemingly unrelated convergence proofs in the literature are actually instances of a common pattern. 
\end{abstract}
{\bf Keywords:} Accretive operators, uniform accretivity,   
uniformly smooth Banach spaces, Ishikawa iterations, 
rates of convergence, proof mining.\\[1mm]
{\bf Mathematics Subject Classification (2010):} 47H05, 47J25, 03F10.

\section{Introduction}

The problem of approximating zeros of accretive set-valued operators 
$A:X\to 2^X$ has been widely studied since the 70's. This is primarily due to the  
importance of these operators in modelling abstract Cauchy problems such as evolution equations (see e.g. 
\cite{Barbu(76),Barbu(10),Reich(1980)}), as well as - for Hilbert spaces $H=X$ (and under the name {\it monotone} operators) - 
their relevance in convex optimization for the 
computation of minima of lower semi-continuous functions $f$, where then $A=\partial f$ 
is the 
subdifferential of $f$ (see e.g. \cite{BauschkeCombettes}) and the 
zero set of $A$ coincides with the set of minimizers of $f.$

In the context of Hilbert spaces, a standard tool for approximating a zero 
of $A$ is the famous Proximal Point Algorithm (PPA) (due to 
\cite{Mar70,Roc76}) which iterates, for varying coefficients $\lambda_n>0$ satisfying appropriate conditions, 
the (single-valued and 
firmly nonexpansive) resolvent 
$J_{\lambda A}=(I+\lambda A)^{-1}$ of $A:$ 
\[ x_{n+1} =J_{\lambda_n A}x_n.\]
The PPA is studied in the context of uniformly convex Banach spaces in 
\cite{BruckReich} but has only recently been 
investigated quantitatively in this setting (\cite{Kohlenbach(PPA)} and 
- for $\lambda_n:=\lambda>0$ - \cite{Koutsoukou}).

For arbitrary Banach spaces, various different types of iterations 
from metric fixed point theory have been used to compute zeros, such as the Krasnoselski-Mann or Ishikawa-type 
iterations. Just as for the PPA, in general these iterations converge only weakly (see e.g. \cite{Bauschkeetal}), and even when strong 
convergence holds (e.g. in the finite dimensional case, for `Halpern-type' 
or `Bruck-type'
modifications, or the operator being odd, see e.g. 
\cite{Reich(1978),Bauschkeetal}), 
there is usually - already for $X=\RR$ - 
no computable (in the sense of Church-Turing) rate of convergence 
(see e.g. \cite{Neumann}).

This situation changes when $A$ satisfies some form of {\it strong} accretivity, which ensures that $A$ has at most one zero $0\in Aq$. General theorems from 
logic guarantee, for a broad range of situations, 
that in the presence of uniqueness one can use quantitative data from the 
uniqueness proofs (e.g. so-called moduli of uniform uniqueness) to give 
rates of convergence for procedures which compute approximate solutions to 
problems (such as finding zeros or fixed points). For all this see e.g. 
\cite{Kohlenbach(book)} (a generalization of the concept of `modulus of 
uniqueness' to the non-unique case, a so-called modulus of regularity, 
also gives a rate of convergence of Fej\'er monotone sequences 
which has been used in different forms many times in the 
literature, see \cite{KohlenbachLopezNicolae} and note that e.g. 
the `uniform convergence condition' 
on $A$ formulated in \cite{NevanlinnaReich} states the existence of a 
special Lipschitz-H\"older type form of a modulus of 
regularity for $zer\,A$). 

Most forms of strong (quasi-)accretivity are stronger and more 
restricted instances of what 
is called {\it uniform accretivity at zero} in \cite[Definition 10]{KohKou(2015.0)}, which is given a quantitative form via a {\it modulus $\Theta$ 
of accretivity at zero}. 

The purpose of this paper is two-fold:
\begin{enumerate} 
\item
to show that in typical cases of known 
strongly convergent algorithms computing the 
unique zero of a strongly accretive operator $A,$ one can extract from the 
convergence proof an explicit rate of convergence in terms of a modulus $\Theta$ of accretivity at zero;
\item 
to provide, using the concept of uniform accretivity at zero together with the logical
analysis of the convergence proofs, a modular and unified account of strong convergence results in the literature which at first glance appear unrelated.
\end{enumerate}
This is exemplified by selecting as test cases the implicit iteration schema 
from \cite{AlbReiSho(2002.0)} together with the explicit Ishikawa-type schemes 
used in \cite{MooNno(2001.0)} and in \cite{Chang} (the latter paper 
being further generalized e.g. in \cite{Lin(2004.0)}). In particular, we recover as special cases the quantitative results 
in \cite{AlbReiSho(2002.0)}. \\[1mm] 
In the case of the Ishikawa-type schemes the conditions on the scalars are so liberal that the Krasnoselskii–Mann
iteration scheme is included as a special case. This is possible since our pseudocontractive operators 
$I-A$ arrive from {\it uniformly} accretive operators $A$ (see 
Lemma \ref{lem-pseudo} and the comment before the lemma). For 
general pseudocontractions the Krasnoselski-Mann schema is known 
to fail to converge already for Lipschitzian pseudocontractive 
selfmappings of compact subsets of a Hilbert space 
while the Ishikawa schema does converge strongly in this situation 
under suitable conditions on the scalars (see \cite{Ishikawa(74)}). \\[1mm] 
 Whereas the main convergence theorems in 
\cite[Theorems 2.1 and 4.1]{AlbReiSho(2002.0)} 
hold in arbitrary Banach spaces and without any continuity assumption on $A,$  
the convergence results in \cite[Theorem 4.1]{Chang} and 
\cite[Theorem 2.2]{MooNno(2001.0)} use the uniform continuity 
of $A$ (w.r.t. the Hausdorff metric) while \cite[Theorem 4.2]{Chang} and 
\cite[Theorem 2.1]{Lin(2004.0)} instead  
use that $X$ is uniformly smooth. Although the assumptions on $A$ being 
uniformly continuous and, respectively, on $X$ being 
uniformly smooth are very different, it turns out they can both be seen 
as instances of the same 
technical lemma. The 
rates of convergence we extract in these cases then also depend 
(in addition to 
$\Theta$) on moduli of uniform continuity for $A$ and, respectively, for the duality 
mapping of $X$, where in the latter case such a modulus can 
be computed in terms of a modulus of uniform smoothness for $X$ (see 
\cite{KohLeu(2012.1)}).

The various forms of strong 
(quasi-)accretivity used in the aforementioned results are all covered 
by mostly more 
restrictive versions 
of our concept of uniform accretivity at zero 
(note that \cite{AlbReiSho(2002.0)}
uses {\it uniform accretivity} to denote a concept which is much 
more restrictive than our notion of uniform accretivity at zero even when 
we drop the restriction `at zero' as it corresponds to 
{\it $\psi$-strong accretivity} as defined in Definition \ref{def-unacc}.(a)
with $\psi$ additionally assumed to be strictly increasing).
Therefore our results strengthen various convergence theorems 
not just quantitatively but also qualitatively.

Since the convergence proofs we study all apply to situations where $A$ can be shown to have a unique zero, in our quantitative results we always \emph{assume} both the existence of a zero and well-definedness of the approximating sequence at hand, which typically allows us to omit certain extra assumptions made in the original papers.

Although no concepts or methods from logic are mentioned explicitly 
in this paper, our approach has been motivated by the tools of the 
\emph{proof mining} program which uses logic-based proof transformations for 
the extraction of effective bounds from prima facie noneffective proofs 
(see \cite{Kohlenbach(book)}). In the case of the proximal point 
algorithm, this approach - again based on the concept of uniform accretivity 
(specialized to the monotone case in Hilbert spaces) - 
has been used in \cite{LeuNiSip} and in the context of uniformly convex 
Banach spaces in \cite{Kohlenbach(PPA)}. For a recent survey on proof 
mining in general see \cite{Kohlenbach(ICM)}.

\section{Preliminaries}
\label{sec-prelim}
$\NN:=\{ 0,1,2,3,\ldots\}$ denotes the set of nonnegative integers. \\ 
Throughout this paper, $X$ will be a real Banach space with dual space $X^\ast$. The normalized duality mapping $J:X\to 2^{X^\ast}$ is defined by
\begin{equation*}
J(x):=\{j\in X^\ast\; : \; \pair{x,j}=\norm{x}^2=\norm{j}^2\}.
\end{equation*}
We will make frequent use of the following well-known geometric inequality.
\begin{lemma}
\label{lem-subdif}
For all $x,y\in X$ and $j\in J(x+y)$ we have
\begin{equation*}
\norm{x+y}^2\leq \norm{x}^2+2\pair{y,j}.
\end{equation*}
\end{lemma}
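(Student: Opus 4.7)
The plan is to unpack the definition of the normalized duality mapping and then apply the standard inequality $2ab \le a^2 + b^2$ at one strategic spot.

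First I would write $\norm{x+y}^2 = \pair{x+y,j}$, which holds because $j \in J(x+y)$ forces $\pair{x+y,j} = \norm{x+y}^2$. Splitting the pairing by linearity in the first slot, this gives $\norm{x+y}^2 = \pair{x,j} + \pair{y,j}$. The goal inequality is therefore equivalent to showing $2\pair{x,j} \le \norm{x}^2 + \norm{x+y}^2$, because one can then add $2\pair{y,j}$ to both sides and divide by $2$ after rearranging.

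Next I would bound $\pair{x,j}$ using the duality inequality $\pair{x,j} \le \norm{x}\cdot\norm{j}$. Since $j \in J(x+y)$ gives $\norm{j} = \norm{x+y}$, this becomes $\pair{x,j} \le \norm{x}\cdot\norm{x+y}$. Finally, the elementary bound $2ab \le a^2 + b^2$ applied to $a = \norm{x}$ and $b = \norm{x+y}$ yields $2\pair{x,j} \le \norm{x}^2 + \norm{x+y}^2$, which is exactly what was needed. Combining with the identity from the first step and subtracting $\norm{x+y}^2$ from both sides gives the claimed inequality.

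There is no real obstacle here; the only subtlety is noticing that $j \in J(x+y)$ must be used \emph{twice}, once to identify $\pair{x+y,j}$ with $\norm{x+y}^2$ and once to identify $\norm{j}$ with $\norm{x+y}$. Everything else is the Cauchy--Schwarz-type duality inequality and the elementary quadratic bound $2ab \le a^2 + b^2$.
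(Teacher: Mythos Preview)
Your proof is correct and follows essentially the same route as the paper's: both use $\pair{x+y,j}=\norm{x+y}^2$, split by linearity, bound $\pair{x,j}\le\norm{x}\cdot\norm{j}=\norm{x}\cdot\norm{x+y}$, and then apply $ab\le\tfrac12(a^2+b^2)$ before rearranging. The paper just compresses these steps into a single chain of inequalities.
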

\begin{proof}
Let $j\in J(x+y)$. Then
\begin{equation*}
\norm{x+y}^2=\pair{x+y,j}\leq \norm{x}\cdot\norm{x+y}+\pair{y,j}\leq \tfrac{1}{2}(\norm{x}^2+\norm{x+y}^2)+\pair{y,j}
\end{equation*}
and the result follows.
\end{proof}
A mapping $A:X\to 2^X$ will be called an operator on $X$. The domain of $A$ is defined by $D(A):=\{x\in X\; : \; Ax\neq\emptyset\}$. We sometimes write $(x,u)\in A$ for $u\in Ax$. The range $R(A)$ of $A$ is defined as $R(A):=\{ y\in X\,:\,
\exists x\in X(y\in Ax)\}.$
%
\subsection{Accretive operators}

For a detailed survey of the various notions of accretivity, including quantitative forms which come equipped with moduli, the reader is encouraged to consult \cite[Section 2.1]{KohKou(2015.0)}. Here, we simply outline the key definitions which play a role in the present paper.
\begin{definition}
\label{def-acc}
An operator $A$ is said to be \emph{accretive} if for all $u\in Ax$ and $v\in Ay$ there exists some $j\in J(x-y)$ such that $\pair{u-v,j}\geq 0$.
\end{definition}
The notion of accretivity was independently introduced (in a slightly different but equivalent form) by Browder \cite{Browder}, Kato \cite{Kato} and Komura \cite{Komura}. However, convergence proofs of the kind we study here typically appeal to various stronger, uniform forms of accretivity:
\begin{definition}
\label{def-unacc}
\begin{enumerate}[(a)]

\item Let $\psi:[0,\infty)\to [0,\infty)$ be a continuous function with $\psi(0)=0$ and $\psi(x)>0$ for $x>0$. Then an operator $A:D(A)\to 2^X$ is said to be \emph{$\psi$-strongly accretive} if
\begin{equation*}
\forall (x,u),(y,v)\in A\; \exists j\in J(x-y)\; (\pair{u-v,j}\geq \psi(\norm{x-y})\norm{x-y}).
\end{equation*}

\item Let $\phi:[0,\infty)\to [0,\infty)$ be a continuous function with $\phi(0)=0$ and $\phi(x)>0$ for $x>0$. Then an operator $A:D(A)\to 2^X$ is said to be \emph{uniformly $\phi$-accretive} if
\begin{equation*}
\forall (x,u),(y,v)\in A\; \exists j\in J(x-y)\; (\pair{u-v,j}\geq \phi(\norm{x-y})).
\end{equation*}

\end{enumerate}
\end{definition}
In the case of $\psi$-strongly accretive operators, $\psi$ is often assumed 
to be strictly increasing in addition (see e.g. \cite{AlbReiSho(2002.0)}). 

It turns out that for all of the results we study in this paper, the above notions can be replaced by the following more general property of being \emph{uniformly accretive at zero}, introduced by Garc\'{i}a-Falset in \cite{GarciaFalcet} and given a quantitative form by the first author in \cite{KohKou(2015.0)}.
\begin{definition}
An accretive operator $A:D(A)\to 2^X$ with $0\in Aq$ is said to be uniformly accretive at zero if
\begin{equation*}
(\ast) \ \ \ \begin{aligned}
&\forall \varepsilon,K>0\; \exists \delta>0\; \forall (x,u)\in A\\
&(\norm{x-q}\in [\varepsilon,K]\to \exists j\in J(x-q)\;(\pair{u,j}\geq \delta)).
\end{aligned}
\end{equation*}
Moreover, any function $\Theta_{(\cdot)}(\cdot):(0,\infty)\times (0,\infty)\to (0,\infty)$ such that $\delta:=\Theta_K(\varepsilon)$ satisfies $(\ast)$ for all $\varepsilon,K>0$ is called a \emph{modulus of uniform accretivity at zero} for $A$.
\end{definition}
In particular, we observe that if $A$ is uniformly $\phi$-accretive, a modulus of uniform accretivity at zero for $A$ is given by
\begin{equation*}
\Theta_K(\varepsilon):=\inf\{\phi(x)\; : \; x\in [\varepsilon,\max\{ 
\varepsilon,K\}]\}.
\end{equation*}
In the case where $\phi$ is also strictly increasing, we can simply let $\Phi_K(\varepsilon):=\phi(\varepsilon)$.
\begin{remark}
Though technically speaking, moduli of uniform accretivity at zero are defined relative to some given $q\in D(A)$ with $0\in Aq$, one can actually show that such a $q$, if it exists, is necessarily unique. Moreover, a modulus of uniqueness for $q$ can be constructed in terms of a modulus of uniform accretivity at zero, as is made precise in \cite[Remark 2]{KohKou(2015.0)}.
\end{remark}
Accretivity of an operator $A$ is typically associated with a corresponding notion of \emph{pseudocontractivity} for the operator $(I-A)$. In the case of uniformly accretive operators at zero, the correspondence is given as follows:
\begin{lemma}
\label{lem-pseudo}
Suppose that $A:D(A)\to 2^X$ with $0\in Aq$ is uniformly accretive at zero with modulus $\Theta_{(\cdot)}(\cdot)$. Then 
\begin{equation*}
\begin{aligned}
&\forall \varepsilon,K>0\; \forall (x,u)\in (I-A)\\
&(\norm{x-q}\in [\varepsilon,K]\to \exists j\in J(x-q)\; (\pair{u-q,j}\leq \norm{x-q}^2-\Theta_K(\varepsilon))).
\end{aligned}
\end{equation*}

\end{lemma}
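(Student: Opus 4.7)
The proof should be essentially a direct unpacking of the definitions, with the only genuine content being the basic identity $\langle x-q, j\rangle = \|x-q\|^2$ for $j \in J(x-q)$.

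My plan is to start by writing out what $(x,u) \in (I-A)$ means in terms of $A$: namely, there exists $v \in Ax$ with $u = x - v$, so that $(x,v) \in A$. I would then apply the hypothesis that $A$ is uniformly accretive at zero with modulus $\Theta_{(\cdot)}(\cdot)$ to this pair $(x,v)$, using the assumption $\|x-q\| \in [\varepsilon,K]$. This yields some $j \in J(x-q)$ with $\langle v, j\rangle \geq \Theta_K(\varepsilon)$.

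With this $j$ in hand, the key observation is that $u - q = (x - q) - v$, so that
\begin{equation*}
\langle u - q, j\rangle = \langle x - q, j\rangle - \langle v, j\rangle = \|x-q\|^2 - \langle v, j\rangle,
\end{equation*}
where I used the defining property $\langle x-q, j\rangle = \|x-q\|^2$ of $j \in J(x-q)$. Combining with the lower bound on $\langle v, j\rangle$ from the previous step gives exactly the desired inequality $\langle u - q, j\rangle \leq \|x-q\|^2 - \Theta_K(\varepsilon)$.

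There is really no substantial obstacle here: the lemma is just a translation of the accretivity condition on $A$ into a ``pseudocontractive'' condition on $I - A$, and the same $j \in J(x-q)$ witnesses both. I do not expect to need Lemma \ref{lem-subdif} or any geometric inequality for this step, as the computation is purely algebraic once the correct functional $j$ is selected. The only mild subtlety worth flagging is ensuring that one carefully distinguishes the element $u \in (I-A)x$ from the corresponding $v \in Ax$, so that the accretivity hypothesis is applied to $(x,v)$ rather than $(x,u)$.
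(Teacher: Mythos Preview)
Your proposal is correct and matches the paper's own proof essentially line for line: the paper also writes $u=x-\bar{u}$ for some $\bar{u}\in Ax$, applies uniform accretivity at zero to obtain $j\in J(x-q)$ with $\pair{\bar{u},j}\geq\Theta_K(\varepsilon)$, and then computes $\pair{u-q,j}=\pair{x-q,j}-\pair{\bar{u},j}=\norm{x-q}^2-\pair{\bar{u},j}\leq\norm{x-q}^2-\Theta_K(\varepsilon)$. The only difference is notational (your $v$ is the paper's $\bar{u}$).
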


\begin{proof}
If $u\in (I-A)x$ then $u=x-\bar{u}$ for $\bar{u}\in Ax$, and thus if $\norm{x-q}\in[\varepsilon,K]$ there exists some $j\in J(x-q)$ such that $\pair{\bar{u},j}\geq \Theta_K(\varepsilon)$. Therefore
\begin{equation*}
\pair{u-q,j}=\pair{x-q,j}-\pair{\bar{u},j}=\norm{x-q}^2-\pair{\bar{u},j}\leq \norm{x-q}^2-\Theta_K(\varepsilon).
\end{equation*}
\end{proof}
%
\section{An abstract technical lemma}
\label{sec-tech}

We begin by presenting an abstract quantitative lemma, which forms the main unifying scheme of the paper. This technical lemma captures a key \emph{combinatorial} idea which is shared by numerous proofs of strong convergence theorems involving accretive operators, and as we will see, quantitative versions of those theorems can be obtained in an entirely modular fashion by instantiating the parameters of our lemma in a suitable way. What is particularly interesting is that in each case we study, those instantiations are obtained by appealing to quantitative versions of assumptions which are seemingly unrelated, which here include properties imposed on the operator $A$ (Sections \ref{sec-arsii} and \ref{sec-mn}) or alternatively attributes of the underlying space $X$ (Section \ref{sec-lin}). Moreover, our abstract result applies to different approximating schemes, including implicit schemes (Sections \ref{sec-impi} and \ref{sec-arsii}) in addition to Ishikawa-type methods (Sections \ref{sec-mn} and \ref{sec-lin}).

\subsection{Rates of convergence and divergence}
\label{sec-tech-rates}

We begin by specifying quantitative versions of a couple of fundamental notions.
\begin{definition}
\label{def-conv}
Let $(\alpha_n)$ be a sequence of nonnegative reals such that $\alpha_n\to 0$. A \emph{rate of convergence} for $(\alpha_n)$ is a function $\phi:(0,\infty)\to \NN$ such that
\begin{equation*}
\forall\varepsilon>0\; \forall n\geq \phi(\varepsilon)\; (\alpha_n\leq \varepsilon).
\end{equation*}
\end{definition}
\begin{definition}
\label{def-div}
Let $(\alpha_n)$ be a sequence of nonnegative reals such that $\sum_{i=0}^\infty\alpha_i=\infty$. A \emph{rate of divergence} for $\sum_{i=0}^\infty\alpha_i$ is a function $r:\NN\times (0,\infty)\to \NN$ such that
\begin{equation*}
\forall N\in\NN\; \forall x>0\; (\sum_{i=N}^{r(N,x)}\alpha_i\geq x).
\end{equation*}
We use the convention that $\sum^m_{i=N}\alpha_i=0$ if $m<N$ and so we always 
have that $r(N,x)\ge N.$
\end{definition}
\begin{remark}
The quantitative formulation of divergence above is also used by the first author in \cite{Kohlenbach(2001.0)}. Note that a more traditional rate of divergence would be a function $f:(0,\infty)\to\NN$ satisfying 
\begin{equation*}
\forall x>0\, (\sum_{i=0}^{f(x)}\alpha_i\geq x),
\end{equation*}
which can be converted into a rate of divergence in our sense by setting $r(N,x):=f(x+S(N))$ where $S:\NN\to (0,\infty)$ is any function satisfying $\sum_{i=0}^{n-1} \alpha_i\leq S(n)$, since then we have
\begin{equation*}
\sum_{i=N}^{f(x+S(N))}\alpha_i=\sum_{i=0}^{f(x+S(N))}\alpha_i-\sum_{i=0}^{n-1}\alpha_i\geq (x+S(N))-S(N)=x.
\end{equation*}
In particular, if the $\alpha_i$ are bounded above by some $K$, we can simply set $r(N,x):=f(x+K\cdot N)$.
\end{remark}

\subsection{The technical lemma}
\label{sec-tech-lemma}

We now present our unifying lemma, which generalises similar abstract results in the literature, such as Lemma 2.2 of \cite{AlbReiSho(2002.0)} and Lemma 2.1 of \cite{MooNno(2001.0)}, the latter having been given a quantitative form as Lemma 1 of \cite{KoeKoh(2011.0)}.
\begin{lemma}
\label{lem-tech}
Let $(\theta_n)$ and $(\alpha_n)$ be sequences of nonnegative reals such that $\sum_{i=0}^\infty\alpha_i$ diverges, and suppose that for any $\varepsilon>0$ there exists some $\delta>0$ and $N\in\NN$ such that
\begin{equation*}
(\ast) \ \ \ \forall n\geq N(\varepsilon< \theta_{n+1}\to\theta_{n+1}\leq \theta_n-\alpha_n\cdot\delta).
\end{equation*}
Then $\theta_n\to 0$ as $n\to\infty$. Moreover, if:
\begin{enumerate}[(i)]

\item\label{item-techi} $K\in (0,\infty)$ satisfies $\theta_n< K$ for all $n\in\NN$,

\item\label{item-techii} $r:\NN\times (0,\infty)\to \NN$ is a rate of divergence for $\sum_{i=0}^\infty\alpha_i$,

\item\label{item-techiii} $N:(0,\infty)\to \NN$ and $\varphi:(0,\infty)\to (0,\infty)$ witness property $(\ast)$ in the sense that for all $\varepsilon> 0$ we have
\begin{equation*}
\forall n\geq N(\varepsilon)(\varepsilon< \theta_{n+1}\to \theta_{n+1}\leq \theta_n-\alpha_n\cdot\varphi(\varepsilon)),
\end{equation*}

\end{enumerate}
then $\Psi_{K,r,N,\varphi}(\varepsilon):=r(N(\varepsilon),K/\varphi(\varepsilon))+1$ is a rate of convergence for $(\theta_n)$.
\end{lemma}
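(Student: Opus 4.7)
The plan is to exploit the fact that condition $(\ast)$ forces a one-sided monotonicity property: once we are past index $N(\varepsilon)$, any $n$ with $\theta_{n+1} > \varepsilon$ satisfies $\theta_{n+1} \leq \theta_n - \alpha_n \varphi(\varepsilon) \leq \theta_n$. Equivalently, any strict increase $\theta_{n+1} > \theta_n$ must land below $\varepsilon$. From this I extract a key \emph{propagation lemma}: if some $m_0 \geq N(\varepsilon)$ satisfies $\theta_{m_0} \leq \varepsilon$, then $\theta_m \leq \varepsilon$ for all $m \geq m_0$. Indeed, a hypothetical failure $\theta_{m+1} > \varepsilon$ at the first violating index would by $(\ast)$ give $\theta_{m+1} \leq \theta_m \leq \varepsilon$, a contradiction.

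For the qualitative convergence, fix $\varepsilon > 0$ and take the $\delta, N$ provided by $(\ast)$. Assume for contradiction that $\theta_{n+1} > \varepsilon$ for every $n \geq N$. Telescoping $(\ast)$ from $N$ to $n$ yields
\[
\theta_{n+1} \;\leq\; \theta_N - \delta \sum_{i=N}^{n} \alpha_i,
\]
whose right-hand side tends to $-\infty$ by divergence of $\sum \alpha_i$, contradicting $\theta_{n+1} \geq 0$. Hence some $n_0 \geq N$ satisfies $\theta_{n_0+1} \leq \varepsilon$, and the propagation lemma gives $\theta_m \leq \varepsilon$ for all $m \geq n_0+1$, establishing $\theta_n \to 0$.

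For the quantitative rate, fix $\varepsilon > 0$ and set $N := N(\varepsilon)$, $\delta := \varphi(\varepsilon)$, $M := r(N, K/\delta)$. Suppose toward contradiction that $\theta_{n+1} > \varepsilon$ for every $n \in \{N, N+1, \ldots, M\}$. Then the same telescoping, combined with the rate-of-divergence bound in (ii) and the boundedness bound in (i), yields
\[
\theta_{M+1} \;\leq\; \theta_N - \delta \sum_{n=N}^{M} \alpha_n \;<\; K - \delta \cdot \frac{K}{\delta} \;=\; 0,
\]
contradicting $\theta_{M+1} > \varepsilon > 0$. Hence some $n^* \in \{N, \ldots, M\}$ satisfies $\theta_{n^*+1} \leq \varepsilon$, and the propagation lemma gives $\theta_m \leq \varepsilon$ for all $m \geq n^*+1$; in particular for all $m \geq M+1 = \Psi_{K,r,N,\varphi}(\varepsilon)$, as required.

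The main subtlety I anticipate is that $(\ast)$ conditions on $\theta_{n+1}$ rather than on $\theta_n$, so one has to argue carefully that a single dip below $\varepsilon$ is preserved forward: this is precisely the role of the propagation lemma, and once it is in hand, both the qualitative statement and the quantitative bound reduce to the same telescoping calculation, differing only in whether one uses the abstract divergence of $\sum\alpha_i$ or the explicit rate $r$ together with the upper bound $K$ on $(\theta_n)$.
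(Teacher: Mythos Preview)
Your proof is correct and follows essentially the same approach as the paper: both isolate the same propagation step (once $\theta_n\leq\varepsilon$ for some $n\geq N(\varepsilon)$, it stays there), and both obtain the index bound via the identical telescoping argument, contradicting $\sum_{i=N(\varepsilon)}^{r(N(\varepsilon),K/\varphi(\varepsilon))}\alpha_i\geq K/\varphi(\varepsilon)$ against $\theta_{N(\varepsilon)}<K$. The only cosmetic difference is that you present the qualitative and quantitative parts as two separate passes, whereas the paper merges them into a single argument instantiated with the explicit $j:=r(N(\varepsilon),K/\varphi(\varepsilon))$.
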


\begin{proof}
We first observe that for any $\varepsilon>0$ and $n\geq N(\varepsilon)$ we have
\begin{equation*}
\theta_n\leq \varepsilon\to \theta_{n+1}\leq \varepsilon.
\end{equation*}
Otherwise, if there were some $n\geq N(\varepsilon)$ with $\theta_n\leq\varepsilon$ and $\varepsilon<\theta_{n+1}$ we would have
\begin{equation*}
\varepsilon<\theta_{n+1}\leq \theta_n-\alpha_n\cdot\varphi(\varepsilon)\leq \theta_n\leq\varepsilon.
\end{equation*}
Therefore to establish $\theta_n\to 0$ it suffices to find, for each $\varepsilon>0$, a single $n\in\NN$ with $\theta_n\leq \varepsilon$. Fixing some $\varepsilon>0$ and $j\geq N(\varepsilon)$, suppose that $\theta_{n+1}>\varepsilon$ for all $n\in\NN$ with $N(\varepsilon)\leq n\leq j.$ Then in 
particular we would have
\begin{equation*}
\alpha_n\cdot\varphi(\varepsilon)\leq \theta_n-\theta_{n+1}
\end{equation*}
for all $n$ in this range, and thus
\begin{equation*}
\varphi(\varepsilon)\sum_{n=N(\varepsilon)}^{j}\alpha_n\leq \sum_{n=N(\varepsilon)}^j(\theta_n-\theta_{n+1})=\theta_{N(\varepsilon)}-\theta_{j+1}\leq \theta_{N(\varepsilon)}< K.
\end{equation*}
But this is a contradiction for $j:=r(N(\varepsilon),K/\varphi(\varepsilon))$, and thus $\theta_{n}\leq\varepsilon$ for some $n\leq j+1$, which means that for $m\geq j+1\geq n$ we also have $\theta_m\leq \varepsilon$. 
\end{proof}

\begin{remark}
  Condition (\ref{item-techi}) of Lemma \ref{lem-tech} is not strictly necessary, as boundedness of $(\theta_n)$ is not necessary to establish $\theta_n\to 0$. However, as a rate of convergence we would then obtain e.g. $\Psi_{(\theta_n),r,N,\varphi}:=r(N(\varepsilon),(\theta_{N(\varepsilon)}+1)/\varphi(\varepsilon))$ which is dependent on the $(\theta_n)$ (or more generally, some sequence 
$(K_n)$ of upper bounds $K_n\ge\theta_n$). 
In each subsequent application of this result, we are able to supply a uniform bound $K$ for our sequence $(\theta_n)$, in which case our lemma results in a rate of convergence which is independent of the $(\theta_n)$.
\end{remark}
\begin{remark}[Linear convergence]
\label{rem-lin}
A finer analysis of Lemma \ref{lem-tech} in special cases can yield more precise convergence speeds for $(\theta_n)$. For example, suppose that $\alpha_n\geq\alpha>0$ for all $n\in\NN$ and some $\alpha$, so that a rate of divergence for $\sum_{i=0}^\infty\alpha_i$ is given by $r(N,x)=\ceil{\frac{x}{\alpha}}+N$, and suppose in addition that $N(\varepsilon)=0$ and $\varphi(\varepsilon)=c\varepsilon$ for some $c>0$ and for all $\varepsilon >0$, 
so that condition (\ref{item-techiii}) can be reduced to
\begin{equation*}
\forall n(\theta_{n+1}\leq \theta_n-\alpha c\theta_{n+1}).
\end{equation*}
Then it follows directly that
\begin{equation*}
\theta_n\leq K\left(\frac{1}{1+\alpha c}\right)^n 
\end{equation*}
and so $\theta_n\to 0$ with linear convergence speed, where a rate of convergence in our sense would be given by $\phi(\varepsilon)=\log_{1+\alpha c}(K/\varepsilon)$. This is a strict improvement of the rate of convergence suggested by Lemma \ref{lem-tech} i.e. $\Psi(\varepsilon)=\ceil{K/\alpha c\varepsilon}+1$.
\end{remark}
We conclude this section by observing that we can reformulate Lemma \ref{lem-tech} so that it no longer makes direct reference to a rate of divergence for $\sum_{i=0}^\infty\alpha_i$, but rather uses the divergence of $\sum_{i=0}^\infty\alpha_i$ \emph{implicitly}. This will later allow us to connect our quantitative convergence theorems to the numerical results presented in \cite{AlbReiSho(2002.0)}.
\begin{lemma}
\label{lem-techrate}
Let $(\theta_n)$, $(\alpha_n)$, $K$, $N$ and $\varphi$ be as in Lemma \ref{lem-tech}, and assume in addition that $\alpha_n>0$ for all $n\in\NN$. Suppose that $f:(0,\infty)\to (0,\infty)$ is strictly decreasing and continuous with $f(\varepsilon)\to \infty$ as $\varepsilon\to 0,$ and
\begin{equation*}
f(\varepsilon)\geq \sum_{i=0}^{N(\varepsilon)-1}\alpha_i+\frac{K}{\varphi(\varepsilon)}
\end{equation*}
for all $\varepsilon>0$. Then for sufficiently large $n\in\NN$ we have
\begin{equation*}
\theta_n\leq f^{-1}\left(\sum^{n-1}_{i=0}\alpha_i\right).
\end{equation*}

\end{lemma}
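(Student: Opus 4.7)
The plan is to translate the desired inequality $\theta_n \le f^{-1}(\sum_{i=0}^{n-1}\alpha_i)$ into the assertion $\theta_n \le \varepsilon_n$, where $\varepsilon_n := f^{-1}\bigl(\sum_{i=0}^{n-1}\alpha_i\bigr)$, and then re-run the telescoping argument from the proof of Lemma \ref{lem-tech} with this specific choice of $\varepsilon$. Since $f$ is strictly decreasing and continuous with $f(\varepsilon)\to\infty$ as $\varepsilon\to 0$, its image is of the form $(c,\infty)$ for some $c\ge 0$; because $\sum_{i=0}^\infty\alpha_i=\infty$, the partial sum $\sum_{i=0}^{n-1}\alpha_i$ eventually exceeds $c$, so $\varepsilon_n$ is well-defined for all sufficiently large $n$, and $\varepsilon_n\to 0$ by continuity of $f^{-1}$.

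Next, I would unpack the hypothesis on $f$ at the value $\varepsilon_n$. Since $f(\varepsilon_n)=\sum_{i=0}^{n-1}\alpha_i$, the assumed inequality reads
\begin{equation*}
\sum_{i=0}^{n-1}\alpha_i \;\ge\; \sum_{i=0}^{N(\varepsilon_n)-1}\alpha_i+\frac{K}{\varphi(\varepsilon_n)}.
\end{equation*}
Using the extra assumption $\alpha_i>0$, this forces $n\ge N(\varepsilon_n)$ (otherwise the left-hand sum would be strictly smaller than the first term on the right), and then subtraction yields the partial-sum bound
\begin{equation*}
\sum_{i=N(\varepsilon_n)}^{n-1}\alpha_i \;\ge\; \frac{K}{\varphi(\varepsilon_n)}.
\end{equation*}

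With this estimate in hand, the contradiction-telescoping from the proof of Lemma \ref{lem-tech} goes through verbatim with $\varepsilon:=\varepsilon_n$ and $j:=n-1$: if one had $\theta_{k+1}>\varepsilon_n$ for every $k$ with $N(\varepsilon_n)\le k\le n-1$, then condition (iii) would give $\alpha_k\,\varphi(\varepsilon_n)\le \theta_k-\theta_{k+1}$ for each such $k$, and telescoping plus the bound above would produce $K\le \varphi(\varepsilon_n)\sum_{k=N(\varepsilon_n)}^{n-1}\alpha_k\le \theta_{N(\varepsilon_n)}-\theta_n<K$, a contradiction. Hence $\theta_m\le\varepsilon_n$ for some $m$ with $N(\varepsilon_n)<m\le n$, and the ``stays below'' property (valid for indices $\ge N(\varepsilon_n)$, also established inside the proof of Lemma \ref{lem-tech}) propagates this to $\theta_n\le\varepsilon_n$, which is the claim.

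The only subtle point, and the main obstacle, is guaranteeing that the interval $[N(\varepsilon_n),n-1]$ is nonempty so that the telescoping contains at least the required mass $K/\varphi(\varepsilon_n)$; this is precisely where the new hypothesis $\alpha_i>0$ enters, since without strict positivity one could not conclude $n\ge N(\varepsilon_n)$ from the inequality on $f(\varepsilon_n)$. Everything else is essentially a re-packaging of the argument already carried out in Lemma \ref{lem-tech}, with the rate of divergence $r$ replaced by the implicit estimate provided by $f$.
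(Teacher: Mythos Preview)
Your proposal is correct and follows essentially the same route as the paper. The only cosmetic difference is that the paper packages the telescoping step by invoking Lemma~\ref{lem-tech} as a black box with the canonical rate of divergence $r(N,x):=\min\{j\ge N:\sum_{i=N}^{j}\alpha_i\ge x\}$ and then shows $n-1\ge r(N(\varepsilon_n),K/\varphi(\varepsilon_n))$, whereas you unpack that telescoping inline; the definition of $\varepsilon_n$, the use of the hypothesis on $f$ to obtain $\sum_{i=N(\varepsilon_n)}^{n-1}\alpha_i\ge K/\varphi(\varepsilon_n)$, and the conclusion $\theta_n\le\varepsilon_n$ are identical in both arguments.
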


\begin{proof}
First note that $f$ must have an inverse $f^{-1}:(a,\infty)\to (0,\infty)$ for $a:=\inf\{f(x)\; : \; x\in (0,\infty)\}$. Define $n_0\in\NN$ to be the least 
natural number 
such that $\sum_{i=0}^{n_0}\alpha_i\in (a,\infty)$, and for $n\geq n_0+1$ define
\begin{equation*}
\varepsilon_n:=f^{-1}\left(\sum_{i=0}^{n-1}\alpha_i\right).
\end{equation*}
Applying Lemma \ref{lem-tech} for $r(N,x)$ defined to be the least $j\geq N$ such that $\sum^j_{i=N}\alpha_i\geq x$, we have $\theta_m\leq \varepsilon_n$ for all $m\geq j+1$, where $j\geq N(\varepsilon_n)$ is the least 
natural number such that
\begin{equation*}
\sum_{i=N(\varepsilon_n)}^j\alpha_i\geq \frac{K}{\varphi(\varepsilon_n)}.
\end{equation*}
Now observing that
\begin{equation*}
\sum_{i=0}^{n-1}\alpha_i=f(\varepsilon_n)\geq \sum^{N(\varepsilon_n)-1}_{i=0}\alpha_i+\frac{K}{\varphi(\varepsilon_n)}\mbox{ \ \ and thus \ \ }\sum^{n-1}_{i=N(\varepsilon_n)} \alpha_i \geq \frac{K}{\varphi(\varepsilon_n)}
\end{equation*}
it follows that $n-1\geq j$ and therefore $n\geq j+1$, which means that $\theta_n\leq \varepsilon_n$. Thus the lemma holds for all $n\geq n_0+1$.
\end{proof}

\subsection{Outline of the remainder of the paper}
\label{sec-tech-outline}

We now turn our attention towards concrete convergence theorems involving strongly accretive operators $A$. We focus on a series of examples, where in each case we utilise Lemma \ref{lem-tech} together with a modulus of uniform accretivity at zero for $A$ to carry out a quantitative analysis of the proof in question, resulting in a series of new, quantitative convergence results which each fall underneath the same unifying scheme.

\section{A simple implicit scheme}
\label{sec-impi}

Our first result will be a quantitative analysis of the following theorem of Alber et al. \cite{AlbReiSho(2002.0)}, which is based on a straightforward implicit approximation method generated by a uniformly accretive operator.
\begin{theorem}[Theorem 2.1 of \cite{AlbReiSho(2002.0)}]
\label{thm-arsi}
Let $D$ be a closed subset of $X$ and $A:D\to 2^X$ a $\psi$-strongly accretive operator for some strictly increasing $\psi$, which satisfies the range condition $(RC)$:
\begin{equation*}
D\subset (I+rA)(D), \ \ \ \forall r>0.
\end{equation*}
Then the following assertions hold:
\begin{enumerate}[(a)]

\item There exists a unique $q\in D$ such that $0\in Aq$.

\item If $(\alpha_i)$ is a sequence of positive reals with $\sum_{i=0}^\infty\alpha_i=\infty$, then if the sequence $(x_n)$ starting from some $x_0\in D$ 
satisfies 
\begin{equation*}
x_{n+1}=x_n-\alpha_{n}u_n, \ \ \ u_n\in Ax_{n+1}
\end{equation*}
we have $\norm{x_n-q}\to 0$.

\end{enumerate}

\end{theorem}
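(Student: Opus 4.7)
The plan is to apply Lemma \ref{lem-tech} to $\theta_n := \norm{x_n - q}^2$, using a witness derived from $\psi$ for condition $(\ast)$. Part (a) uniqueness is immediate from $\psi$-strong accretivity: if $0 \in Aq_1 \cap Aq_2$ then there is $j \in J(q_1 - q_2)$ with $0 = \pair{0-0,j} \geq \psi(\norm{q_1-q_2})\norm{q_1-q_2}$, forcing $q_1 = q_2$ since $\psi$ is positive on $(0,\infty)$. The existence of $q$, together with well-definedness of the sequence $(x_n)$ (which is guaranteed inductively by the range condition: given $x_n \in D$ we pick $x_{n+1} \in D$ with $x_n \in (I + \alpha_n A)x_{n+1}$ and then $u_n := (x_n - x_{n+1})/\alpha_n \in Ax_{n+1}$), can be treated as standing hypotheses per the convention announced in the introduction.

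The key combinatorial step is as follows. Write $x_{n+1} - q = (x_n - q) + (-\alpha_n u_n)$; then Lemma \ref{lem-subdif} delivers, for every $j \in J(x_{n+1}-q)$,
\begin{equation*}
\norm{x_{n+1} - q}^2 \leq \norm{x_n - q}^2 - 2\alpha_n \pair{u_n, j}.
\end{equation*}
Select specifically the $j$ furnished by $\psi$-strong accretivity applied to the pairs $(x_{n+1}, u_n) \in A$ and $(q, 0) \in A$, so that $\pair{u_n, j} \geq \psi(\norm{x_{n+1}-q})\norm{x_{n+1}-q}$. This yields the descent inequality
\begin{equation*}
\theta_{n+1} \leq \theta_n - 2\alpha_n \psi(\norm{x_{n+1}-q})\norm{x_{n+1}-q}.
\end{equation*}
Two consequences follow. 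First, $(\theta_n)$ is non-increasing, so $\theta_n \leq K := \norm{x_0-q}^2$ for all $n$. Second, if $\theta_{n+1} > \varepsilon$ for some $\varepsilon > 0$, then $\norm{x_{n+1}-q} > \sqrt{\varepsilon}$, and by monotonicity of $\psi$ we obtain $\theta_{n+1} \leq \theta_n - 2\alpha_n \psi(\sqrt{\varepsilon})\sqrt{\varepsilon}$.

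This is precisely condition $(\ast)$ of Lemma \ref{lem-tech} applied to $(\theta_n)$ and $(2\alpha_n)$ (which is still divergent), with $N(\varepsilon) := 0$ and $\varphi(\varepsilon) := 2\psi(\sqrt{\varepsilon})\sqrt{\varepsilon}$. Invoking Lemma \ref{lem-tech} then yields $\theta_n \to 0$, hence $\norm{x_n - q} \to 0$, together with an explicit rate $\Psi_{K,r,0,\varphi}(\varepsilon) = r(0, K/\varphi(\varepsilon)) + 1$, where $r$ is any rate of divergence for $\sum_i 2\alpha_i$. The only real subtlety lies in coupling Lemma \ref{lem-subdif} with the accretivity hypothesis through the \emph{choice} of $j$: because Lemma \ref{lem-subdif} is valid for every $j \in J(x+y)$, we are free to insert the specific dual element witnessing $\psi$-strong accretivity at that step. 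Everything else is absorbed by the abstract Lemma \ref{lem-tech} of Section \ref{sec-tech}, so no further combinatorial argument is needed.
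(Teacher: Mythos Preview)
Your argument is correct. The paper itself does not reprove Theorem~\ref{thm-arsi} directly (it is quoted from \cite{AlbReiSho(2002.0)}); its contribution is the quantitative version Theorem~\ref{thm-arsi-quant}, so that is the relevant comparison.

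The route you take differs from the paper's in one structural respect. You apply Lemma~\ref{lem-subdif} to get $\norm{x_{n+1}-q}^2 \leq \norm{x_n-q}^2 - 2\alpha_n\pair{u_n,j}$ and then run Lemma~\ref{lem-tech} with $\theta_n = \norm{x_n-q}^2$. The paper instead expands $\norm{x_{n+1}-q}^2 = \pair{x_{n+1}-q,j}$ directly to obtain $\norm{x_{n+1}-q}^2 \leq \norm{x_n-q}\cdot\norm{x_{n+1}-q} - \alpha_n\pair{u_n,j}$, divides through by $\norm{x_{n+1}-q}$, and runs Lemma~\ref{lem-tech} with $\theta_n = \norm{x_n-q}$ unsquared. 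The payoff of the paper's division trick is visible in Remark~\ref{rem-absi}: in the $\psi$-strongly accretive case the factor $\norm{x_{n+1}-q}$ in $\psi(\norm{x_{n+1}-q})\norm{x_{n+1}-q}$ cancels, leaving $\varphi(\varepsilon)=\psi(\varepsilon)$ and a rate $r(0,K/\psi(\varepsilon))+1$. Your squared-norm route produces (after converting back to $\norm{x_n-q}\leq\varepsilon$) a rate of the form $r(0,K'/2\psi(\varepsilon)\varepsilon)+1$, which carries an extra factor of $\varepsilon$ in the denominator and is therefore slightly weaker. Both are perfectly valid for the qualitative statement; the paper's version just extracts a sharper bound in this specific case.

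One cosmetic point: Lemma~\ref{lem-tech}(i) asks for a \emph{strict} bound $\theta_n < K$, so you should take $K$ to be any number strictly larger than $\norm{x_0-q}^2$ rather than $\norm{x_0-q}^2$ itself (compare the hypothesis $\norm{x_0-q}<K$ in Theorem~\ref{thm-arsi-quant}).
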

Our quantitative analysis in this case and in all those that follow will focus on the extraction of an explicit rate of convergence for $\norm{x_n-q}\to 0$ from the corresponding proof of this fact. In doing so, we adopt the following pattern:
\begin{quote}
We \emph{assume} from the outset the existence of some $q$ satisfying $0\in Aq$, and take some \emph{arbitrary} sequence $(x_n)$ satisfying the relevant approximation scheme.
\end{quote}
By focusing exclusively on the proof that $\norm{x_n-q}\to 0$, we are typically able to weaken certain conditions of the original theorem, which are often needed only to establish the \emph{existence} of a zero $0\in Aq$ or to ensure that the sequence of approximations $(x_n)$ is well-defined. As such, we obtain a rate of convergence for $\norm{x_n-q}\to 0$ which is valid in a much more general setting. At the same time, the original results guarantee us that there is always a natural context in which a zero and a corresponding approximation sequence do indeed exist!

In the case of Theorem \ref{thm-arsi} above, for the purpose of our quantitative convergence result, we are able to dispense with the range condition together with the assumption that $D$ is closed, and can take $A$ to be an arbitrary operator which is uniformly accretive at zero.

\begin{theorem}
\label{thm-arsi-quant}
Let $A:D(A)\to 2^X$ with $0\in Aq$ be uniformly accretive at zero with modulus $\Theta$. Let $(\alpha_i)$ be a sequence of nonnegative reals such that $\sum_{i=0}^\infty\alpha_i=\infty$ with modulus of divergence $r$, and suppose that $(x_n)$ and $(u_n)$ are sequences satisfying $x_n\in D(A)$ and
\begin{equation*}
x_{n+1}=x_n-\alpha_nu_n, \ \ \ u_n\in Ax_{n+1}
\end{equation*}
for all $n\in\NN$. Finally, let $K>0$ be such that $\norm{x_0-q}<K$. Then $\norm{x_n-q}\to 0$ with rate of convergence
\begin{equation*}
\Phi_{\Theta,r,K}(\varepsilon):=r(0,K^2/\Theta_K(\varepsilon))+1.
\end{equation*} 
\end{theorem}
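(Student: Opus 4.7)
The plan is to apply Lemma \ref{lem-tech} to the squared distance $\theta_n := \norm{x_n - q}^2$ and then convert the resulting rate from $(\theta_n)$ to $(\norm{x_n-q})$ by substituting $\varepsilon \mapsto \varepsilon^2$.

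The first step is the standard geometric estimate. Since $x_{n+1} - q = (x_n - q) - \alpha_n u_n$, Lemma \ref{lem-subdif} applied with $x := x_n - q$ and $y := -\alpha_n u_n$ yields, for every $j \in J(x_{n+1}-q)$,
\[ \theta_{n+1} \leq \theta_n - 2\alpha_n \pair{u_n,j}. \]
Because $A$ is accretive, applied to the pairs $(x_{n+1},u_n),(q,0) \in A$ it supplies some $j$ for which $\pair{u_n,j}\geq 0$, so that $\theta_{n+1}\leq\theta_n$. Iterating, $\theta_n\leq\theta_0 < K^2$ for all $n$, which supplies the uniform bound $K^2$ required for condition (\ref{item-techi}) of Lemma \ref{lem-tech}.

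To verify condition (\ref{item-techiii}) of Lemma \ref{lem-tech} with $N(\varepsilon) := 0$ and $\varphi(\varepsilon) := \Theta_K(\sqrt{\varepsilon})$, fix $\varepsilon > 0$ and suppose $\theta_{n+1} > \varepsilon$. Then $\norm{x_{n+1}-q} \in (\sqrt{\varepsilon}, K)$, so uniform accretivity at zero applied at $(x_{n+1},u_n)\in A$ supplies some $j\in J(x_{n+1}-q)$ with $\pair{u_n,j}\geq \Theta_K(\sqrt{\varepsilon})$. Feeding this particular $j$ back into the subdifferential inequality above yields
\[ \theta_{n+1} \leq \theta_n - 2\alpha_n \Theta_K(\sqrt{\varepsilon}) \leq \theta_n - \alpha_n \Theta_K(\sqrt{\varepsilon}), \]
as required.

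Lemma \ref{lem-tech} then guarantees $\theta_n\to 0$ with rate $r(0, K^2/\Theta_K(\sqrt{\varepsilon})) + 1$. Since $\norm{x_n-q}\leq\varepsilon$ is equivalent to $\theta_n\leq\varepsilon^2$, replacing $\varepsilon$ by $\varepsilon^2$ recovers exactly the claimed rate $\Phi_{\Theta,r,K}(\varepsilon) = r(0, K^2/\Theta_K(\varepsilon)) + 1$. There is no real obstacle beyond the bookkeeping of the squaring/unsquaring and the observation that the witness $j$ used when invoking Lemma \ref{lem-subdif} must coincide with the one supplied by uniform accretivity at zero; since Lemma \ref{lem-subdif} holds for \emph{every} $j\in J(x_{n+1}-q)$ while uniform accretivity at zero supplies a \emph{particular} one, the two can indeed be matched.
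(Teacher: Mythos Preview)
Your proof is correct. It differs from the paper's proof in a minor but notable way: the paper works directly with $\theta_n:=\norm{x_n-q}$, using the exact identity $\norm{x_{n+1}-q}^2=\pair{x_{n+1}-q,j}\leq\norm{x_n-q}\cdot\norm{x_{n+1}-q}-\alpha_n\pair{u_n,j}$ and then dividing by $\norm{x_{n+1}-q}$ to obtain $\norm{x_{n+1}-q}\leq\norm{x_n-q}-\alpha_n\Theta_K(\varepsilon)/K$, so that Lemma~\ref{lem-tech} is applied with $\varphi(\varepsilon)=\Theta_K(\varepsilon)/K$ and bound $K$. You instead work with the squares $\theta_n:=\norm{x_n-q}^2$, use Lemma~\ref{lem-subdif}, and apply Lemma~\ref{lem-tech} with $\varphi(\varepsilon)=\Theta_K(\sqrt{\varepsilon})$ and bound $K^2$, unsquaring at the end. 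Both routes yield the same rate $r(0,K^2/\Theta_K(\varepsilon))+1$; your version discards a harmless factor of $2$ along the way. Interestingly, your squared-distance approach is exactly the pattern the paper adopts in the later Theorems~\ref{thm-arsii-quant}, \ref{thm-mn-quant} and \ref{thm-lin-quant}, so in a sense you have chosen the more ``uniform'' presentation.
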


\begin{proof}
We first observe that for any $j\in J(x_{n+1}-q)$ we have
\begin{equation}
\label{eqn-abs0}
\begin{aligned}
\norm{x_{n+1}-q}^2=&\pair{x_{n+1}-q,j}\\
=&\pair{x_n-q,j}-\alpha_n\pair{u_n,j}\\
\leq & \norm{x_n-q}\cdot \norm{x_{n+1}-q}-\alpha_n\pair{u_n,j}.
\end{aligned}
\end{equation}
We argue by induction that $\norm{x_n-q}<K$ for all $n\in\NN$: For $n=0$ this holds by assumption, while the induction step follows directly from (\ref{eqn-abs0}) together with the accretivity of $A$, which ensures that $\pair{u_n,j}\geq 0$ for some $j\in J(x_{n+1}-q)$.

Now, fixing some $n\in\NN$ and $\varepsilon>0$, suppose that $\varepsilon<\norm{x_{n+1}-q}$. Then since in particular we would have $\norm{x_{n+1}-q}\in [\varepsilon,K]$, by uniform accretivity of $A$ at zero it follows that there exists some $j\in J(x_{n+1}-q)$ such that $\pair{u_n,j}\geq \Theta_K(\varepsilon)$. Substituting this into (\ref{eqn-abs0}) and dividing by $\norm{x_{n+1}-q}$ we obtain
\begin{equation}
\label{eqn-abs1}
\norm{x_{n+1}-q}\leq \norm{x_n-q}-\frac{\alpha_n\Theta_K(\varepsilon)}{\norm{x_{n+1}-q}}<\norm{x_n-q}-\frac{\alpha_n\Theta_K(\varepsilon)}{K}.
\end{equation}
We are now able to apply Lemma \ref{lem-tech} for $\theta_n:=\norm{x_n-q}$. Conditions (\ref{item-techi}) and (\ref{item-techii}) of the lemma are clearly satisfied by $K$ and $r$, while for condition (\ref{item-techiii}) we set $N(\varepsilon):=0$ and $\varphi(\varepsilon):=\Theta_K(\varepsilon)/K$, and our rate of convergence is obtained directly.
\end{proof}
\begin{remark}
\label{rem-absi}
In \cite{AlbReiSho(2002.0)}, the operator $A$ is assumed to be $\psi$-strongly accretive for some strictly increasing $\psi$. Under the additional assumption that $0\in Aq$, $A$ must then also be uniformly accretive at zero with modulus $\Theta_K(\varepsilon)=\psi(\varepsilon)\cdot\varepsilon$, since for $u\in Ax$ with $\norm{x-q}\in [\varepsilon,K]$ there is, by $\psi$-strong accretivity, some $j\in J(x-q)$ such that
\begin{equation*}
\pair{u,j}\geq \psi(\norm{x-q})\norm{x-q}\geq \psi(\varepsilon)\cdot\varepsilon=\Theta_K(\varepsilon).
\end{equation*}
However, in the case of $\psi$-strong accretivity, we can reformulate (\ref{eqn-abs1}) as
\begin{equation*}
\norm{x_{n+1}-q}\leq \norm{x_n-q}-\frac{\alpha_n\psi(\norm{x_{n+1}-q})\norm{x_{n+1}-q}}{\norm{x_{n+1}-q}}\leq \norm{x_n-q}-\alpha_n\psi(\varepsilon)
\end{equation*}
and thus an improved rate of convergence for $\norm{x_n-q}\to 0$ is given by 
\begin{equation*}
\Phi_{\psi,r,K}:=r(0,K/\psi(\varepsilon))+1.
\end{equation*}
Moreover, following Remark \ref{rem-lin}, for the particular case that $\alpha_n\geq \alpha>0$ for some $\alpha$ and $\psi(\varepsilon)=c\varepsilon$ for some $c>0$ and for all $\varepsilon>0$, we would have
\begin{equation*}
\norm{x_n-q}\leq K\left(\frac{1}{1+\alpha c}\right)^n
\end{equation*}
and thus $\norm{x_n-q}\to 0$ linearly. This observation is analogous to the Example (1) sketched on p. 97 of \cite{AlbReiSho(2002.0)}.
\end{remark}
By appealing to Lemma \ref{lem-techrate}, we obtain an implicit rate of convergence closely related to Theorem 3.1 of \cite{AlbReiSho(2002.0)}.
\begin{corollary}
\label{cor-arsi-rate}
Let $A:D(A)\to 2^X$ with $0\in Aq$ be a $\psi$-strongly accretive operator for some strictly increasing $\psi$, and otherwise let $(\alpha_i),r,(x_n),(u_n)$ and $K$ be as in Theorem \ref{thm-arsi-quant}. Then $\norm{x_n-q}\to 0$ with
\begin{equation*}
\norm{x_n-q}<\psi^{-1}\left(\frac{K}{\sum_{i=0}^{n-1}\alpha_i}\right)
\end{equation*}
sufficiently large $n\in\NN$.
\end{corollary}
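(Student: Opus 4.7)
The plan is to feed the stronger recursive estimate from Remark \ref{rem-absi} into Lemma \ref{lem-techrate}, essentially reading off the bound by an appropriate choice of the auxiliary function $f$. Recall that in the $\psi$-strongly accretive case, Remark \ref{rem-absi} already upgrades the inequality (\ref{eqn-abs1}) in the proof of Theorem \ref{thm-arsi-quant} to
\begin{equation*}
\norm{x_{n+1}-q}\leq \norm{x_n-q}-\alpha_n\psi(\varepsilon)
\end{equation*}
whenever $\varepsilon<\norm{x_{n+1}-q}$. This means condition (\ref{item-techiii}) of Lemma \ref{lem-tech} holds with $N(\varepsilon):=0$ and $\varphi(\varepsilon):=\psi(\varepsilon)$, and condition (\ref{item-techi}) with $K$ (strict) as in the hypothesis. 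Thus all the ingredients needed to invoke Lemma \ref{lem-techrate} on $\theta_n:=\norm{x_n-q}$ are in place.

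Next I would choose the auxiliary function $f:(0,\infty)\to(0,\infty)$ as
\begin{equation*}
f(\varepsilon):=\frac{K}{\psi(\varepsilon)}.
\end{equation*}
Since $\psi$ is continuous, strictly increasing, with $\psi(0)=0$ and $\psi(x)>0$ for $x>0$, the function $f$ is continuous, strictly decreasing, and satisfies $f(\varepsilon)\to\infty$ as $\varepsilon\to 0$. With $N(\varepsilon)=0$ the sum in the hypothesis of Lemma \ref{lem-techrate} is empty, so the required inequality $f(\varepsilon)\geq \sum_{i=0}^{N(\varepsilon)-1}\alpha_i+K/\varphi(\varepsilon)$ reduces to $f(\varepsilon)\geq K/\psi(\varepsilon)$, which holds by definition. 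Inverting yields $f^{-1}(y)=\psi^{-1}(K/y)$, so Lemma \ref{lem-techrate} gives, for sufficiently large $n$,
\begin{equation*}
\norm{x_n-q}\leq f^{-1}\!\left(\sum_{i=0}^{n-1}\alpha_i\right)=\psi^{-1}\!\left(\frac{K}{\sum_{i=0}^{n-1}\alpha_i}\right),
\end{equation*}
which is the claimed bound. The strict inequality $<$ stated in the corollary can be traced back to the strict bound $\norm{x_0-q}<K$ and the strict inequality in (\ref{eqn-abs1}) of Theorem \ref{thm-arsi-quant}, both of which propagate through the monotone step in the proof of Lemma \ref{lem-tech}.

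I do not anticipate a real obstacle here: the whole statement is designed to be a direct specialisation of Lemma \ref{lem-techrate}. The only bookkeeping points that require a line of justification are (i) checking that $f(\varepsilon)=K/\psi(\varepsilon)$ satisfies the structural hypotheses on $f$ (continuity, strict monotonicity, blow-up at $0$), which rests on the standing assumptions on $\psi$, and (ii) distinguishing the strict from the weak inequality in the conclusion, which is a routine adjustment given that $K$ is a strict upper bound by hypothesis.
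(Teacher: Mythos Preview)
Your proposal is correct and follows essentially the same route as the paper: apply Lemma \ref{lem-techrate} with the parameters $N(\varepsilon)=0$, $\varphi(\varepsilon)=\psi(\varepsilon)$ from Remark \ref{rem-absi}, take $f(\varepsilon):=K/\psi(\varepsilon)$, and read off $f^{-1}(x)=\psi^{-1}(K/x)$. You actually spell out the verification of the structural hypotheses on $f$ and the strict-versus-weak inequality more explicitly than the paper does.
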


\begin{proof}
We apply Lemma \ref{lem-techrate} with parameters instantiated as in Remark \ref{rem-absi} i.e. $N(\varepsilon)=0$ and $\varphi(\varepsilon)=\psi(\varepsilon)$. Then in particular, we can define our bounding function $f:(0,\infty)\to (0,\infty)$ by $f(\varepsilon):=K/\psi(\varepsilon)$, observing that as $\varepsilon\to 0$ then $\psi(\varepsilon)\to 0$ and hence $f(\varepsilon)\to \infty$. The result then follows by observing that $f^{-1}(x)=\psi^{-1}(K/x)$.
\end{proof}

\begin{remark}
Note that Corollary \ref{cor-arsi-rate} is broadly analogous but not identical to the corresponding Theorem 3.1 of \cite{AlbReiSho(2002.0)}, which is to be expected, since the latter uses an integral comparison rather than a rate of divergence for $\sum_{i=0}^\infty\alpha_i$.
\end{remark}

\section{An implicit scheme using approximating operators}
\label{sec-arsii}

Our second case study is also taken from \cite{AlbReiSho(2002.0)}. Here, the implicit scheme studied in the previous section is modified to one of the form
\begin{equation*}
x_{n+1}=x_n-\alpha_nu_n, \ \ \ u_n\in A_nx_{n+1}
\end{equation*}
for some sequence of operators $(A_n)$, where in order to maintain convergence of the $(x_n)$ to some zero $q$ when it exists, a convergence property for the $(A_n)$ is required. In \cite{AlbReiSho(2002.0)} this takes the form of approximation relative to the Hausdorff distance.
\begin{definition}
\label{def-app}
Let $A$ and $A_n:D\to 2^X$ be operators defined on some subset $D$ of $X$ for $n\in\NN$. We say that the sequence $(A_n)$ approximates the operator $A$ if there exists a sequence of positive reals $(h_n)$ with $h_n\to 0$ as $n\to\infty$ such that 
\begin{equation*}
\forall x\in D\; \forall n\in\NN\; (H(A_nx,Ax)\leq h_n\xi(\norm{x}))
\end{equation*}
where $\xi:[0,\infty)\to [0,\infty)$ is some given function and $H$ denotes the Hausdorff distance between sets, defined as usual by
\begin{equation*}
H(P,Q)=\max\{\sup_{x\in X}\inf_{y\in Y}\norm{x-y},\sup_{y\in Y}\inf_{x\in X}\norm{x-y}\}.
\end{equation*}
\end{definition}
We analyse the following generalisation of Theorem 2.1 of \cite{AlbReiSho(2002.0)}.
\begin{theorem}[Theorem 4.1 of \cite{AlbReiSho(2002.0)}]
\label{thm-arsii}
Let $D$ be a closed subset of $X$ and $A:D\to 2^X$ a $\psi$-strongly accretive operator for some strictly increasing $\psi$ which satisfies the range condition (RC). Suppose that the sequence of operators $A_n:D\to 2^X$ approximates $A$ and each $A_n$ satisfies the strong range condition that for any $r>0$ and $u\in D$ there exists a unique $x\in D$ with
\begin{equation*}
u\in (I+rA_n)x.
\end{equation*}
If $(\alpha_i)$ is a sequence of positive reals with $\sum_{i=0}^\infty \alpha_i=\infty$, $(x_n)$ is the sequence starting from some $x_0\in D$ and defined by 
\begin{equation*}
x_{n+1}=x_n-\alpha_nu_n,\ \ \ u_n\in A_nx_{n+1},
\end{equation*}
and $(x_n)$ is bounded, then $\norm{x_n-q}\to 0$ for the unique zero $0\in Aq$ (which exists by Theorem \ref{thm-arsi} of this paper i.e. Theorem 2.1 of \cite{AlbReiSho(2002.0)}).
\end{theorem}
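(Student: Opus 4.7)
The plan is to recast this in the quantitative style of Theorem \ref{thm-arsi-quant}: \emph{assume} the existence of $q$ with $0\in Aq$, assume $A$ is uniformly accretive at zero with modulus $\Theta$, take $r$ to be a rate of divergence for $\sum_{i=0}^\infty \alpha_i$, take $K$ with $\norm{x_n-q}\leq K$ for all $n$ (boundedness of $(x_n)$ being an explicit hypothesis in the original), and then extract a rate of convergence for $\norm{x_n-q}\to 0$ by instantiating Lemma \ref{lem-tech} with $\theta_n:=\norm{x_n-q}$. The extra quantitative input needed to accommodate the approximating operators $A_n$ will be a rate of convergence $N_h$ for $h_n\to 0$, together with an upper bound $M$ for $\xi$ on the interval $[0,K+\norm{q}]$.

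As in the proof of Theorem \ref{thm-arsi-quant}, for any $j\in J(x_{n+1}-q)$ one has
\[
\norm{x_{n+1}-q}^2 \leq \norm{x_n-q}\cdot\norm{x_{n+1}-q}-\alpha_n\pair{u_n,j}.
\]
The new twist is that $u_n\in A_nx_{n+1}$ rather than $Ax_{n+1}$, so uniform accretivity at zero does not apply to $u_n$ directly. I would exploit the Hausdorff approximation to pick, for each $\tau>0$, some $\bar{u}_n\in Ax_{n+1}$ with $\norm{u_n-\bar{u}_n}\leq h_n\xi(\norm{x_{n+1}})+\tau$ (the infimum in $H(A_nx_{n+1},Ax_{n+1})$ need not be attained). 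Provided $\norm{x_{n+1}-q}\in[\varepsilon,K]$, uniform accretivity at zero then furnishes $j\in J(x_{n+1}-q)$ with $\pair{\bar{u}_n,j}\geq\Theta_K(\varepsilon)$, and hence $\pair{u_n,j}\geq\Theta_K(\varepsilon)-K(h_n\xi(\norm{x_{n+1}})+\tau)$. Substituting, dividing by $\norm{x_{n+1}-q}\geq\varepsilon$ and letting $\tau\to 0$ yields
\[
\norm{x_{n+1}-q}\leq\norm{x_n-q}-\frac{\alpha_n\Theta_K(\varepsilon)}{K}+\frac{\alpha_nKh_n\xi(\norm{x_{n+1}})}{\varepsilon}.
\]

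To close the argument I would define $N(\varepsilon):=N_h\bigl(\varepsilon\Theta_K(\varepsilon)/(2K^2M)\bigr)$, so that for $n\geq N(\varepsilon)$ the error term is at most $\alpha_n\Theta_K(\varepsilon)/(2K)$, verifying condition (\ref{item-techiii}) of Lemma \ref{lem-tech} with $\varphi(\varepsilon):=\Theta_K(\varepsilon)/(2K)$ and producing a rate of convergence of the form $r\bigl(N(\varepsilon),2K^2/\Theta_K(\varepsilon)\bigr)+1$. The main obstacle is precisely the coupling between $A_n$ and $A$: the accretivity inequality must be applied to an element of $Ax_{n+1}$, but the iteration only delivers an element of $A_nx_{n+1}$, and matching the two via an approximate infimum introduces the unpleasant $1/\varepsilon$ factor in the error term which percolates through to the definition of $N(\varepsilon)$. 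It is also worth noting that the simple inductive boundedness argument used in Theorem \ref{thm-arsi-quant} breaks down here (one no longer has $\pair{u_n,j}\geq 0$ for free), which is exactly why boundedness of $(x_n)$ is an explicit hypothesis in the original theorem and must be supplied as the constant $K$ in the quantitative refinement.
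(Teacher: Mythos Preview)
Your proposal is correct and follows the same high-level template as the paper's own quantitative version (Theorem~\ref{thm-arsii-quant}): approximate $u_n\in A_nx_{n+1}$ by some $v_n\in Ax_{n+1}$, apply uniform accretivity at zero to $v_n$, and feed the resulting recursive inequality into Lemma~\ref{lem-tech}. Two technical choices differ, however.

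First, the paper takes $\theta_n:=\norm{x_n-q}^2$ and uses Lemma~\ref{lem-subdif} to obtain $\norm{x_{n+1}-q}^2\le\norm{x_n-q}^2-2\alpha_n\pair{u_n,j}$, whereas you take $\theta_n:=\norm{x_n-q}$ and use the product inequality from Theorem~\ref{thm-arsi-quant}. Second, the paper packages the Hausdorff approximation into the predicate $H^\ast$ and an abstract rate $\mu$ of uniform approximation (Definitions~\ref{def-haus}--\ref{def-unifapp}, Lemma~\ref{lem-hausapprox}), avoiding your $\tau\to 0$ limiting step entirely. Both differences are cosmetic, but the squared-norm route gives the cleaner bound $N(\varepsilon)=\mu_{K+K'}(\Theta_K(\varepsilon)/2K)$ with no $1/\varepsilon$ factor.

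On that last point: the $1/\varepsilon$ you flag is not intrinsic to the coupling of $A_n$ and $A$ but an artifact of dividing \emph{before} absorbing the error term. Even in your linear setup, if for $n\ge N_h(\Theta_K(\varepsilon)/2KM)$ you first bound $-\alpha_n\Theta_K(\varepsilon)+\alpha_nKh_nM\le -\alpha_n\Theta_K(\varepsilon)/2$ and \emph{then} divide by $\norm{x_{n+1}-q}\le K$, you recover $\varphi(\varepsilon)=\Theta_K(\varepsilon)/2K$ with $N(\varepsilon)$ independent of $\varepsilon$ in the denominator, matching the paper's rate up to constants.
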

As in the previous section, we simply assume the existence of some $0\in Aq$, and as such, omit all range conditions from our quantitative version of this result. However, that $(A_n)$ approximates the operator $A$ is essential for convergence of the $(x_n)$. Just as various notions of strong accretivity are replaced by uniform accretivity at zero, we define below a general, uniform variant of the approximation property which reflects the more restricted way in which that property is actually used in the proof of Theorem \ref{thm-arsii}.
\begin{definition}
\label{def-haus}
We define the predicate $H^\ast\subseteq 2^X\times 2^X\times (0,\infty)$ by
\begin{equation*}
H^\ast[P,Q,a]:\equiv \forall u\in P\; \exists v\in Q\; (\norm{u-v}\leq a).
\end{equation*}
\end{definition}

\begin{definition}
\label{def-unifapp}
Let $A$ and $A_n:D\to 2^X$ be operators with $D(A)=D(A_n)=D.$ 
We say that $(A_n)$ \emph{uniformly approximates} $A$ with the {\it rate $\mu_{(\cdot)}(\cdot):(0,\infty)\times (0,\infty)\to \NN$ of uniform approximation} if 
\begin{equation*}
\begin{aligned}
\forall K,\varepsilon >0\; \forall n\ge \mu_K(\varepsilon)\,\forall x\in D
\left(\norm{x}\leq K\to H^\ast[A_nx,Ax,\varepsilon]\right).
\end{aligned}
\end{equation*}
\end{definition}

\begin{lemma}
\label{lem-hausapprox}
Suppose that $(A_n)$ approximates $A$ (in the sense of Definition \ref{def-app}) with respect to $(h_n)$ and some $\xi:[0,\infty)\to [0,\infty)$, and moreover there is a function $\xi^\ast:(0,\infty)\to (0,\infty)$ satisfying
\begin{equation*}
\forall x,y\in [0,\infty)(x\leq y\wedge y>0 \to \xi(x)\leq \xi^\ast(y)).
\end{equation*}
Then $(A_n)$ uniformly approximates $A$. Moreover, if $\phi:(0,\infty)\to\NN$ is a rate of convergence for $h_n\to 0$ then
\begin{equation*}
\mu_L(\varepsilon):=\phi(2\varepsilon/3\xi^\ast(L))
\end{equation*}
is a rate of uniform approximation for $(A_n)$ and $A$.
\end{lemma}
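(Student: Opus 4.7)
The plan is to unpack Definitions \ref{def-app} and \ref{def-unifapp} and match them via the two hypotheses on $\xi^\ast$ and $\phi$. Specifically, I would fix arbitrary $K,\varepsilon>0$, $n\geq \mu_K(\varepsilon):=\phi(2\varepsilon/(3\xi^\ast(K)))$, $x\in D$ with $\norm{x}\leq K$, and an arbitrary $u\in A_n x$, and produce some $v\in Ax$ with $\norm{u-v}\leq\varepsilon$, which is exactly the content of $H^\ast[A_n x, Ax, \varepsilon]$.

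From Definition \ref{def-app} I have $H(A_n x,Ax)\leq h_n\xi(\norm{x})$, and the definition of Hausdorff distance immediately yields $\inf_{v\in Ax}\norm{u-v}\leq h_n\xi(\norm{x})$. To bound the right-hand side uniformly, I would invoke the two hypotheses in turn. Since $\norm{x}\leq K$ and $K>0$, the hypothesis on $\xi^\ast$ yields $\xi(\norm{x})\leq \xi^\ast(K)$; and since $n\geq \phi(2\varepsilon/(3\xi^\ast(K)))$, the rate of convergence property of $\phi$ yields $h_n\leq 2\varepsilon/(3\xi^\ast(K))$. Multiplying these gives
\[
h_n\xi(\norm{x})\leq \frac{2\varepsilon}{3}.
\]

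The only step that needs a moment's care is the last one: from $\inf_{v\in Ax}\norm{u-v}\leq 2\varepsilon/3$ one cannot in general extract a $v$ realising the infimum, so I would simply select $v\in Ax$ with $\norm{u-v}\leq 2\varepsilon/3+\varepsilon/3=\varepsilon$, using the $\varepsilon/3$ slack. This is precisely what the factor $2/3$ in the definition of $\mu_L(\varepsilon)$ is designed to buy. I do not anticipate any real obstacle; the proof is a clean composition of the two uniformity hypotheses, with the infimum-to-witness passage being the only constructive subtlety.
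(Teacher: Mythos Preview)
Your proposal is correct and follows essentially the same route as the paper: the paper phrases the infimum-to-witness step as the general implication ``$H(P,Q)<a\Rightarrow H^\ast[P,Q,a]$'' and obtains the strict bound $H(A_nx,Ax)<\tfrac{3}{2}h_n\xi^\ast(K)\leq\varepsilon$, whereas you use the same $\varepsilon/3$ slack directly to pick a witness within $\varepsilon$ of $u$. The arithmetic and the role of the factor $2/3$ are identical.
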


\begin{proof}
We first observe that for $P,Q\in 2^X$, whenever $H(P,Q)<a$ for some $a\in (0,\infty)$ it follows that $H^\ast[P,Q,a]$. To see this, note that $H(P,Q)<a$ implies in particular that for all $u\in P$ we have
\begin{equation*}
\inf_{v\in V}\norm{u-v}<a
\end{equation*}
and thus there must exist some $v\in V$ with $\norm{u-v}< a$. Now, fixing some $n,K$ (with $K>0$) and $x\in D$ with $\norm{x}\leq K$, we have
\begin{equation*}
H(A_nx,Ax)\leq h_n\xi(\norm{x})\leq h_n\xi^\ast(K)<\tfrac{3}{2} h_n\xi^\ast(K)
\end{equation*}
where for the last step we use $\xi^\ast(K)>0$. Now let $n\ge 
\phi(2\varepsilon/3\xi^*(K)),$ then $\tfrac{3}{2} h_n\xi^\ast(K)
\le \varepsilon$ and so $H^*(A_nx,Ax,\varepsilon).$
\end{proof}
We are now ready to state and prove our quantitative formulation of Theorem \ref{thm-arsii}.
\begin{theorem}
\label{thm-arsii-quant}
Let $A:D\to 2^X$ with $0\in Aq$ be uniformly accretive at zero with modulus $\Theta$, and $A_n:D\to 2^X$ be a sequence of operators which uniformly approximates $A$ with rate $\mu$. Let $(\alpha_i)$ be a sequence of nonnegative reals such that $\sum_{i=0}^\infty\alpha_i=\infty$ with modulus of divergence $r$, and suppose that $(x_n)$ and $(u_n)$ are sequences satisfying $x_n\in D$ and 
\begin{equation*}
x_{n+1}=x_n-\alpha_nu_n, \ \ \ u_n\in A_nx_{n+1}
\end{equation*}
for all $n\in\NN$. Finally, suppose $K,K'\in (0,\infty)$ satisfy $\norm{x_n-q}<K$ for all $n\in\NN$ and $\norm{q}<K'$. Then $\norm{x_n-q}\to 0$ with rate of convergence
\begin{equation*}
\Phi_{\Theta,\mu,r,K,K'}(\varepsilon):=r\left(\mu_{K+K'}\left(\frac{\Theta_{K}(\varepsilon)}{2K}\right),\frac{K^2}{\Theta_K(\varepsilon)}\right)+1.
\end{equation*}
\end{theorem}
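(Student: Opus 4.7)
The plan is to follow the blueprint of the proof of Theorem \ref{thm-arsi-quant}, but at the step where uniform accretivity at zero is invoked we must first correct for the fact that $u_n\in A_n x_{n+1}$ rather than $u_n\in Ax_{n+1}$. This correction is supplied by uniform approximation, for which we need a uniform bound on $\norm{x_{n+1}}$; this is furnished by $\norm{x_{n+1}-q}<K$ and $\norm{q}<K'$, giving $\norm{x_{n+1}}<K+K'$.

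First I would apply Lemma \ref{lem-subdif} with $x:=x_n-q$ and $y:=-\alpha_n u_n$ (so that $x+y=x_{n+1}-q$) to obtain, for any $j\in J(x_{n+1}-q)$,
\begin{equation*}
\norm{x_{n+1}-q}^2\leq \norm{x_n-q}^2-2\alpha_n\pair{u_n,j}.
\end{equation*}
Passing to the squared distance (instead of mimicking the direct calculation of Theorem \ref{thm-arsi-quant}) is the reason the final rate displays a $K^2/\Theta_K(\varepsilon)$ rather than a $K^2/\Theta_K(\varepsilon^2)$ style factor.

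Next, fix $\varepsilon>0$ and suppose $\norm{x_{n+1}-q}>\varepsilon$, so that $\norm{x_{n+1}-q}\in[\varepsilon,K]$. For $n\geq \mu_{K+K'}(\delta)$, uniform approximation (Definition \ref{def-unifapp}) yields some $\bar u_n\in Ax_{n+1}$ with $\norm{u_n-\bar u_n}\leq\delta$. Uniform accretivity at zero then provides $j\in J(x_{n+1}-q)$ with $\pair{\bar u_n,j}\geq \Theta_K(\varepsilon)$. Since $\norm{j}=\norm{x_{n+1}-q}\leq K$, we have $\pair{u_n,j}\geq \Theta_K(\varepsilon)-\delta K$, and choosing $\delta:=\Theta_K(\varepsilon)/(2K)$ gives $\pair{u_n,j}\geq \Theta_K(\varepsilon)/2$. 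Substituting into the inequality above, we obtain for all $n\geq \mu_{K+K'}(\Theta_K(\varepsilon)/(2K))$ with $\norm{x_{n+1}-q}>\varepsilon$ that
\begin{equation*}
\norm{x_{n+1}-q}^2\leq \norm{x_n-q}^2-\alpha_n\Theta_K(\varepsilon).
\end{equation*}

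Finally, I would apply Lemma \ref{lem-tech} to the sequence $\theta_n:=\norm{x_n-q}^2$ (bounded by $K^2$), with $N(\tilde\varepsilon):=\mu_{K+K'}(\Theta_K(\sqrt{\tilde\varepsilon})/(2K))$ and $\varphi(\tilde\varepsilon):=\Theta_K(\sqrt{\tilde\varepsilon})$, treating the implication ``$\tilde\varepsilon<\theta_{n+1}\to\theta_{n+1}\leq\theta_n-\alpha_n\varphi(\tilde\varepsilon)$'' via the reformulation $\sqrt{\tilde\varepsilon}<\norm{x_{n+1}-q}$. This yields a rate of convergence for $\theta_n\to 0$ of the form $r(N(\tilde\varepsilon),K^2/\varphi(\tilde\varepsilon))+1$; instantiating at $\tilde\varepsilon:=\varepsilon^2$ converts this into a rate for $\norm{x_n-q}\to 0$ and produces exactly $\Phi_{\Theta,\mu,r,K,K'}(\varepsilon)$.

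The main obstacle is the balancing in the middle step: we must choose $\delta$ small enough that the approximation error $\delta K$ does not swamp the accretivity estimate $\Theta_K(\varepsilon)$, yet the resulting bound on $n$ (through $\mu_{K+K'}$) must remain tractable. The choice $\delta=\Theta_K(\varepsilon)/(2K)$ is essentially forced by this trade-off, and correctly propagating this $\varepsilon$-dependent threshold into the role of the function $N$ in Lemma \ref{lem-tech} is what distinguishes this proof from the cleaner argument in Section \ref{sec-impi}.
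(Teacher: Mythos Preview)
Your proposal is correct and follows essentially the same approach as the paper's proof: both use Lemma \ref{lem-subdif} to pass to squared distances, invoke uniform approximation with threshold $\Theta_K(\varepsilon)/(2K)$ to produce an element $\bar u_n\in Ax_{n+1}$ close to $u_n$, apply uniform accretivity at zero to $\bar u_n$, and then feed the resulting inequality into Lemma \ref{lem-tech} with $\theta_n=\norm{x_n-q}^2$ before substituting $\varepsilon^2$ for $\varepsilon$ at the end. The only difference is cosmetic: the paper carries the variable for the squared quantity (hence $\sqrt{\varepsilon}$ appears throughout), whereas you first argue at the level of $\norm{x_{n+1}-q}>\varepsilon$ and only square at the final application of Lemma \ref{lem-tech}.
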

\begin{proof}
Using the assumption that $(A_n)$ uniformly approximates $A$, together with the assumption that $x_{n+1}\in D$ and $\norm{x_{n+1}}\leq \norm{x_{n+1}-q}+\norm{q}<K+K'$ for each $n\in\NN$, we have $H^\ast[A_nx_{n+1},Ax_{n+1},\Theta_K
(\sqrt{\varepsilon})/2K]$ for all $n\ge N(\varepsilon):=
\mu_{K+K'}\left(\Theta_K(\sqrt{\varepsilon})/2K\right).$ 
In particular, this means that for all $n\ge N(\varepsilon)$ there exists some $v_n\in Ax_{n+1}$ such that 
\begin{equation}
\label{eqn-abss0}
\norm{u_n-v_n}\le \tfrac{\Theta_K(\sqrt{\varepsilon})}{2K}.
\end{equation}
Now, for any $j\in J(x_{n+1}-q)$ we have
\begin{equation}
\label{eqn-abss1}
\begin{aligned}
\norm{x_{n+1}-q}^2=&\norm{x_n-\alpha_nu_n-q}^2\\
\stackrel{L. \ref{lem-subdif}}{\leq}&\norm{x_n-q}^2- 2\alpha_n\pair{u_n,j}\\
=&\norm{x_n-q}^2+2\alpha_n\pair{v_n-u_n,j}-2\alpha_n\pair{v_n,j}\\
\leq &\norm{x_n-q}^2+2\alpha_n\norm{v_n-u_n}\cdot \norm{x_{n+1}-q}-2\alpha_n\pair{v_n,j}\\
\leq &\norm{x_n-q}^2-2\alpha_n(\pair{v_n,j}- 
\tfrac{\Theta_K(\sqrt{\varepsilon})}{2}),
\end{aligned}
\end{equation}
where for the last step we use (\ref{eqn-abss0}) by which
\begin{equation*}
\norm{v_n-u_n}\cdot\norm{x_{n+1}-q}< K\cdot 
\tfrac{\Theta_K(\sqrt{\varepsilon})}{2K}.
\end{equation*}
Now suppose that $\varepsilon<\norm{x_{n+1}-q}^2$, and thus $\norm{x_{n+1}-q}\in [\sqrt{\varepsilon},K]$. Then by uniform accretivity of $A$ at zero there exists some $j\in J(x_{n+1}-q)$ such that $\pair{v_n,j}\geq \Theta_{K}(\sqrt{\varepsilon})$ and hence
\begin{equation}
\label{eqn-abss2}
\begin{aligned}
\pair{v_n,j}-\tfrac{\Theta_K(\sqrt{\varepsilon})}{2}&\geq \Theta_{K}(\sqrt{\varepsilon})-\tfrac{\Theta_K(\sqrt{\varepsilon})}{2} \\
&= \tfrac{\Theta_{K}(\sqrt{\varepsilon})}{2}.
\end{aligned}
\end{equation}
Substituting (\ref{eqn-abss2}) into (\ref{eqn-abss1}), for $n\geq N(\varepsilon)$ and $\varepsilon<\norm{x_{n+1}-q}^2$ we have
\begin{equation*}
\norm{x_{n+1}-q}^2\leq \norm{x_n-q}^2-\alpha_n\cdot \varphi(\varepsilon)
\end{equation*}
for $\varphi(\varepsilon):=\Theta_K(\sqrt{\varepsilon})$. Therefore applying Lemma \ref{lem-tech} for $\theta_n:=\norm{x_n-q}^2\le K^2$, where condition (\ref{item-techi}) is witnessed by $K^2$, (\ref{item-techii}) by $r$ and (\ref{item-techiii}) by $N$ and $\varphi$ as defined above, we obtain a rate of convergence for $\norm{x_n-q}^2\to 0$, which can be modified to a rate of convergence for $\norm{x_n-q}\to 0$ by substituting $\varepsilon^2$ for $\varepsilon$. 
\end{proof}
We conclude our study of \cite{AlbReiSho(2002.0)} with a final quantitative result that forms a more direct counterpart of  Theorem 4.1 in 
\cite{AlbReiSho(2002.0)}, which brings together Lemma \ref{lem-hausapprox} and Theorem \ref{thm-arsii-quant} above, and in addition incorporates the discussion on pp.100-101 of \cite{AlbReiSho(2002.0)}, in which boundedness of the $\norm{x_n}$ is replaced by the a priori condition that the $A_n$ are each accretive and $\sum_{n=0}^\infty \alpha_n h_n<\infty$.
\begin{theorem}
\label{thm-arsiii-quant}
Let $A:D\to 2^X$ with $0\in Aq$ be uniformly accretive at zero with modulus $\Theta$, and $A_n:D\to 2^X$ be a sequence of accretive operators each satisfying the range condition (RC) which approximates $A$ with respect to $(h_n)$ and some $\xi:[0,\infty)\to [0,\infty)$. Let $\phi:(0,\infty)\to\NN$ be a rate of convergence for $h_n\to 0$ and $\xi^\ast:(0,\infty)\to (0,\infty)$ a function satisfying
\begin{equation*}
\forall x,y\in [0,\infty)(x\leq y \wedge y>0\to \xi(x)\leq \xi^\ast(y)).
\end{equation*}
In addition, let $(\alpha_i)$ be a sequence of nonnegative reals such that $\sum_{i=0}^\infty\alpha_i=\infty$ with modulus of divergence $r$ and 
$\sum^{\infty}_{i=0}\alpha_ih_i<\infty$, and suppose that $(x_n)$ and $(u_n)$ are sequences satisfying $x_n\in D$ and
\begin{equation*}
x_{n+1}=x_n-\alpha_nu_n, \ \ \ u_n\in A_{n}x_{n+1}
\end{equation*}
for all $n\in\NN$. Finally, suppose that $K_0,K_1,K_2\in (0,\infty)$ satisfy $\norm{x_0-q}<K_0$, $\norm{q}<K_1$ and $\sum_{i=0}^n\alpha_ih_i<K_2$ for all $n\in\NN$. Then $\norm{x_n-q}\to 0$ with rate of convergence
\begin{equation*}
\Phi_{\Theta,\phi,\xi^\ast,r,K_0,K_1,K_2}(\varepsilon):=r\left(\phi\left(\frac{\Theta_K(\varepsilon)}{3K\cdot \xi^\ast(K+K_1)}\right),\frac{K^2}{\Theta_K(\varepsilon)}\right)+1
\end{equation*}
for $K:=K_0+K_2\cdot \xi^\ast(K_1)$.
\end{theorem}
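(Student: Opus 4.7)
The strategy is to reduce the theorem to Theorem \ref{thm-arsii-quant}. Two new things must be arranged: (i) a uniform bound $K$ on $\norm{x_n - q}$, derived from the hypotheses that each $A_n$ is accretive and that the partial sums $\sum_{i=0}^n \alpha_i h_i$ are bounded by $K_2$; and (ii) a rate of uniform approximation $\mu$ of $A$ by $(A_n)$, obtained from the hypothesized rate $\phi$ of convergence $h_n \to 0$ via Lemma \ref{lem-hausapprox}. Once both are in hand, the claimed rate is produced by feeding $K$, $\mu$, and the auxiliary bound $K' := K_1$ on $\norm{q}$ into Theorem \ref{thm-arsii-quant}, using $\norm{x_n} \leq \norm{x_n - q} + \norm{q} < K + K_1$ in the approximation step.

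For the uniform bound, I would reason as follows. Since $0 \in Aq$ and $H(A_n q, Aq) \leq h_n \xi(\norm{q}) \leq h_n \xi^\ast(K_1)$, for every $\eta > 0$ one can pick $w_n \in A_n q$ with $\norm{w_n} \leq h_n \xi^\ast(K_1) + \eta$. Accretivity of $A_n$ applied to $(x_{n+1}, u_n)$ and $(q, w_n)$ then yields some $j \in J(x_{n+1}-q)$ with $\pair{u_n - w_n, j} \geq 0$, so $\pair{u_n, j} \geq -\norm{w_n} \cdot \norm{x_{n+1}-q}$. Writing $x_{n+1} - q = (x_n - q) + (-\alpha_n u_n)$ and invoking Lemma \ref{lem-subdif} with this $j$ gives
\begin{equation*}
\norm{x_{n+1}-q}^2 \leq \norm{x_n-q}^2 + 2\alpha_n \norm{w_n} \cdot \norm{x_{n+1}-q}.
\end{equation*}
Solving this as a quadratic in $\norm{x_{n+1}-q}$, inducting on $n$, and choosing a summable sequence of errors $\eta_n$ (e.g.\ $\eta_n := 2^{-n}/\alpha_n$ when $\alpha_n > 0$) that can be absorbed into $K_2 \xi^\ast(K_1)$, one obtains a uniform bound of the desired form $\norm{x_n - q} < K$.

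For the approximation rate, Lemma \ref{lem-hausapprox} gives $\mu_L(\varepsilon) := \phi(2\varepsilon/(3\xi^\ast(L)))$. Substituting into the rate $r(\mu_{K+K'}(\Theta_K(\varepsilon)/(2K)),\, K^2/\Theta_K(\varepsilon)) + 1$ provided by Theorem \ref{thm-arsii-quant} with $K' := K_1$ and simplifying the composition $\mu_{K+K_1}(\Theta_K(\varepsilon)/(2K)) = \phi(\Theta_K(\varepsilon)/(3K\xi^\ast(K+K_1)))$ immediately yields the stated $\Phi_{\Theta,\phi,\xi^\ast,r,K_0,K_1,K_2}(\varepsilon)$.

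The main obstacle is the uniform boundedness step: the elements $w_n \in A_n q$ are only guaranteed via an infimum, so one has to balance a free approximation error against the summability of $\sum \alpha_i h_i$ in a quantitatively tight way in order to land on the constant $K = K_0 + K_2 \xi^\ast(K_1)$ stated in the theorem rather than on a weaker bound with a larger multiplicative factor in front of $K_2 \xi^\ast(K_1)$ that would come out of the naive quadratic estimate $\norm{x_{n+1}-q} \leq \norm{x_n-q} + 2\alpha_n \norm{w_n}$. Once this bound is pinned down, the remainder of the proof is a mechanical composition of the earlier quantitative lemmas.
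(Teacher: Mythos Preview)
Your high-level plan---establish a uniform bound $K$ on $\norm{x_n-q}$, manufacture a rate of uniform approximation $\mu$ via Lemma~\ref{lem-hausapprox}, then feed both into Theorem~\ref{thm-arsii-quant} with $K':=K_1$---is exactly the paper's strategy, and the final composition producing $\Phi$ is identical.

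The paper obtains the bound on $\norm{x_n-q}$ differently, though. It actually \emph{uses} the range condition (RC) on $A_n$: from $q\in (I+\alpha_nA_n)(D)$ it extracts auxiliary points $y_n\in D$, $v_n\in A_ny_n$ with $q=y_n+\alpha_nv_n$, then applies accretivity of $A_n$ twice---once to the pair $(y_n,v_n),(q,w_n)$ to get $\norm{y_n-q}\le\alpha_nh_n\xi^\ast(K_1)$, and once to $(x_{n+1},u_n),(y_n,v_n)$ to get $\norm{x_{n+1}-y_n}\le\norm{x_n-q}$. Adding these gives the clean recursion $\norm{x_{n+1}-q}\le\norm{x_n-q}+\alpha_nh_n\xi^\ast(K_1)$ with no factor~$2$ and no error term. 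Your route, by contrast, bypasses (RC) entirely and works directly with $(x_{n+1},u_n)$ and $(q,w_n)$; this is more economical with hypotheses and arguably simpler.

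The ``main obstacle'' you flag is self-inflicted: it comes from invoking Lemma~\ref{lem-subdif}, which gives $\norm{x_{n+1}-q}^2\le\norm{x_n-q}^2+2\alpha_n\norm{w_n}\norm{x_{n+1}-q}$ and hence the unwanted factor~$2$. If instead you expand directly via the duality map (exactly as in the proof of Theorem~\ref{thm-arsi-quant}),
\[
\norm{x_{n+1}-q}^2=\pair{x_{n+1}-q,j}=\pair{x_n-q,j}-\alpha_n\pair{u_n,j}\le\norm{x_n-q}\,\norm{x_{n+1}-q}+\alpha_n\norm{w_n}\,\norm{x_{n+1}-q},
\]
then dividing by $\norm{x_{n+1}-q}$ yields $\norm{x_{n+1}-q}\le\norm{x_n-q}+\alpha_n\norm{w_n}$ with no extraneous factor. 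The free error in choosing $w_n$ can then be absorbed into the strict slack $K_0-\norm{x_0-q}>0$, and you land on the stated constant $K=K_0+K_2\xi^\ast(K_1)$. With that one-line fix your argument goes through and is in fact tidier than the paper's, since it never touches the range condition.
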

\begin{proof}
Since $A_n$ satisfies the range condition, we have $q\in (I+\alpha_nA_n)(D)$, which means there exist a pair of sequences $(y_n)$ and $(v_n)$ with
\begin{equation*}
q=y_n+\alpha_nv_n, \ \ \ v_n\in A_ny_n
\end{equation*}
for all $n\in\NN$. We now observe that since $\norm{q}<K_1$ we have $H(A_nq,Aq)<h_n\xi^\ast(K_1)$, and thus since $0\in Aq$ there exists some $w_n\in A_nq$ satisfying $\norm{w_n}< h_n\xi^\ast(K_1)$. Now for any $j\in J(y_n-q)$ we have
\begin{equation}
\label{eqn-arsss0}
\norm{y_n-q}^2=\pair{y_n-q,j}=-\alpha_n\pair{v_n,j}=-\alpha_n\pair{w_n,j}-\alpha_n\pair{v_n-w_n,j}.
\end{equation}
Since $A_n$ is accretive there exists some $j\in J(y_n-q)$ such that $\pair{v_n-w_n,j}\geq 0$ and substituting this into (\ref{eqn-arsss0}) we get
\begin{equation*}
\norm{y_n-q}^2\leq -\alpha_n\pair{w_n,j}\leq \alpha_n\norm{w_n}\norm{y_n-q}
\end{equation*}
and therefore
\begin{equation}
\label{eqn-arsss1}
\norm{y_n-q}\leq \alpha_n\norm{w_n}\leq \alpha_nh_n\xi^\ast(K_1).
\end{equation}
By a similar calculation we see that for $j\in J(x_{n+1}-y_n)$ we have
\begin{equation*}
\norm{x_{n+1}-y_n}^2=\pair{x_{n+1}-y_n,j}=\pair{x_n-q,j}-\alpha_n\pair{u_n-v_n,j}
\end{equation*}
and again by accretivity of $A_n$ on $u_n\in A_nx_{n+1}$ and $v_n\in A_ny_n$ we see that
\begin{equation}
\label{eqn-arsss2}
\norm{x_{n+1}-y_n}\leq \norm{x_n-q}.
\end{equation}
Putting (\ref{eqn-arsss1}) and (\ref{eqn-arsss2}) together we obtain
\begin{equation*}
\norm{x_{n+1}-q}\leq \norm{x_{n+1}-y_n}+\norm{y_n-q}\leq \norm{x_n-q}+\alpha_nh_n\xi^\ast(K_1)
\end{equation*}
and therefore
\begin{equation*}
\norm{x_n-q}\leq \norm{x_0-q}+\sum_{i=0}^{n-1}\alpha_ih_i\xi^\ast(K_1)<K_0+\xi^\ast(K_1)\cdot K_2.
\end{equation*}
This establishes boundedness of $\norm{x_n-q}$ for $n\in\NN$. We can now apply Theorem \ref{thm-arsii-quant} for $K:=K_0+\xi^\ast(K_1)\cdot K_2$, $K':=K_1$ and (by Lemma \ref{lem-hausapprox}) $\mu_L(\varepsilon):=\phi(2\varepsilon/3\xi^\ast(L))$ to obtain the given rate of convergence.
\end{proof}

\section{An Ishikawa-type scheme for uniformly continuous operators}
\label{sec-mn}

Our next result is a quantitative analysis of a theorem due to Moore and Nnoli, which rather than the implicit schemes studied in the previous section deals with an explicit Ishikawa-type method for approximating zeros of accretive operators $A$. Here, convergence is made possible by demanding that the operator $A$ be uniformly continuous in the following sense.
\begin{definition}
\label{def-unifcont}
Let $CB(X)$ denote the family of all nonempty subsets of $X$ which are closed and bounded.
An operator $A:D(A)\to CB(X)\subset 2^X$ is said to be uniformly continuous if it satisfies
\begin{equation*}
\label{eqn-unifcont}
\forall \varepsilon>0\; \exists \delta>0\; \forall x,y\in X (\norm{x-y}\leq \delta\to H(Ax,Ay)\leq \varepsilon),
\end{equation*}
where we recall that $H$ denotes the Hausdorff distance.
\end{definition}
\begin{theorem}[Theorem 2.2 of \cite{MooNno(2001.0)}]
Let $A:D(A)\to CB(X)$ be a uniformly continuous and uniformly quasi-accretive operator with nonempty closed values such that the range of $(I-A)$ is bounded and $0\in Aq$ for some $q\in X$. Let $(\alpha_n),(\beta_n)$ be sequences in $[0,\tfrac{1}{2})$ such that $\alpha_n\to 0$, $\beta_n\to 0$ and $\sum_{i=0}^\infty\alpha_i=\infty$. Finally, let $(x_n)$ be the sequences generated from some $x_0\in X$ satisfying the Ishikawa-type scheme
\begin{equation*}
\begin{aligned}
x_{n+1}&=(1-\alpha_n)x_n+\alpha_nu_n, \ \ \ u_n\in (I-A)y_n\\
y_n&=(1-\beta_n)x_n+\beta_nv_n, \ \ \ v_n\in (I-A)x_n
\end{aligned}
\end{equation*}
Then $(x_n)$ converges strongly to $q$.
\end{theorem}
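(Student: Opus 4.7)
The plan is to apply Lemma \ref{lem-tech} to the sequence $\theta_n := \norm{x_n - q}^2$; a rate for $\norm{x_n - q} \to 0$ is then recovered by the substitution $\varepsilon \mapsto \varepsilon^2$, as in Theorem \ref{thm-arsii-quant}. Boundedness of the range of $(I-A)$ by some $M$ gives $\norm{u_n}, \norm{v_n} \leq M$, and a routine induction on
\begin{equation*}
\norm{x_{n+1}-q} \leq (1-\alpha_n)\norm{x_n-q} + \alpha_n(M + \norm{q})
\end{equation*}
supplies a uniform bound $K := \max(\norm{x_0-q}, M + \norm{q})$ on $\norm{x_n - q}$. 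Applying Lemma \ref{lem-subdif} to the decomposition $x_{n+1} - q = (1-\alpha_n)(x_n-q) + \alpha_n(u_n - q)$ we then obtain, for any $j \in J(x_{n+1}-q)$,
\begin{equation*}
\norm{x_{n+1}-q}^2 \leq (1-\alpha_n)^2 \norm{x_n-q}^2 + 2\alpha_n \pair{u_n - q, j}.
\end{equation*}

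The heart of the argument is to bound $\pair{u_n - q, j}$ from above in terms of $\norm{x_{n+1}-q}^2 - \Theta_K(\sqrt{\varepsilon})$, but the natural pseudocontractivity estimate Lemma \ref{lem-pseudo} lives at points of $(I-A)$, whereas $u_n \in (I-A)y_n$ while $j \in J(x_{n+1}-q)$. I would bridge this gap using uniform continuity of $A$: writing $u_n = y_n - \bar{u}_n$ with $\bar{u}_n \in Ay_n$, pick $\bar{w}_n \in Ax_{n+1}$ with $\norm{\bar{u}_n - \bar{w}_n}$ approximating $H(Ay_n, Ax_{n+1})$, and set $w_n := x_{n+1} - \bar{w}_n \in (I-A)x_{n+1}$, so that $\norm{u_n - w_n} \leq \norm{y_n - x_{n+1}} + \norm{\bar{u}_n - \bar{w}_n}$. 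Assuming $\norm{x_{n+1}-q} > \sqrt{\varepsilon}$, Lemma \ref{lem-pseudo} delivers a $j \in J(x_{n+1}-q)$ with $\pair{w_n - q, j} \leq \norm{x_{n+1}-q}^2 - \Theta_K(\sqrt{\varepsilon})$, whence
\begin{equation*}
\pair{u_n - q, j} \leq \norm{x_{n+1}-q}^2 - \Theta_K(\sqrt{\varepsilon}) + K\bigl(\norm{y_n - x_{n+1}} + \norm{\bar{u}_n - \bar{w}_n}\bigr).
\end{equation*}

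Substituting this into the main inequality and rearranging yields
\begin{equation*}
(1 - 2\alpha_n)\norm{x_{n+1}-q}^2 \leq (1-\alpha_n)^2 \norm{x_n-q}^2 - 2\alpha_n \Theta_K(\sqrt{\varepsilon}) + 2\alpha_n K \bigl(\norm{y_n - x_{n+1}} + \norm{\bar{u}_n - \bar{w}_n}\bigr).
\end{equation*}
The four perturbative quantities $\alpha_n^2$, the $(1-2\alpha_n)^{-1}$ dilution factor, $\norm{y_n - x_{n+1}}$ and $\norm{\bar{u}_n - \bar{w}_n}$ can all be made arbitrarily small for $n$ past some explicit $N(\varepsilon)$, using a rate for $\alpha_n \to 0$, a rate for $\beta_n \to 0$, the direct bound $\norm{y_n - x_{n+1}} \leq (\alpha_n + \beta_n)(K + \norm{q} + M)$, and a modulus of uniform continuity for $A$ (which via the Hausdorff approximation controls $\norm{\bar{u}_n - \bar{w}_n}$). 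Choosing $N(\varepsilon)$ so that each error term is absorbed into, say, a quarter of $\alpha_n \Theta_K(\sqrt{\varepsilon})$, one extracts a descent inequality $\norm{x_{n+1}-q}^2 \leq \norm{x_n-q}^2 - \alpha_n \Theta_K(\sqrt{\varepsilon})/c$ for some absolute constant $c$, which fits Lemma \ref{lem-tech} with $\varphi(\varepsilon) := \Theta_K(\sqrt{\varepsilon})/c$.

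The main obstacle is the delicate order of quantifiers at the key step: the $j$ that Lemma \ref{lem-pseudo} produces depends on the specific $w_n$, which in turn requires a prior choice of $\bar{w}_n$ close to $\bar{u}_n$, and hence on the already-fixed $y_n$; all the subsequent inner-product estimates must be carried out for this particular $j$. Combined with the $(1-2\alpha_n)^{-1}$ factor that contaminates the descent constant, this forces $N(\varepsilon)$ to depend simultaneously on the rates for $\alpha_n, \beta_n \to 0$ and on the modulus of uniform continuity of $A$. The bookkeeping is routine but must be assembled with care.
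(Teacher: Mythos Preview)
Your proposal is correct and follows essentially the same route as the paper's proof of the quantitative version (Theorem~\ref{thm-mn-quant}): bound $\norm{x_n-q}$ uniformly, use uniform continuity of $A$ to transfer from $u_n\in(I-A)y_n$ to some $w_n\in(I-A)x_{n+1}$ (the paper calls your $\bar{w}_n$ by $\bar{\sigma}_{n+1}$ and your $w_n$ by $\bar{v}_{n+1}$), invoke Lemma~\ref{lem-pseudo} at $x_{n+1}$ to produce the right $j$, absorb the error terms via the rates for $\alpha_n,\beta_n\to 0$ and the modulus of continuity, and feed the resulting descent inequality into Lemma~\ref{lem-tech}. Your constants differ cosmetically (your $K=\max(\norm{x_0-q},M+\norm{q})$ is in fact slightly sharper than the paper's $K=2K_0+K_1$), and your remark about the quantifier dependence of $j$ on $w_n$ is exactly the care the paper takes implicitly.
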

\begin{remark}
The notion of uniform quasi-accretivity (cf. \cite[Definition 1.3]{MooNno(2001.0)}) is essentially a formulation of uniform $\phi$-accretivity for zeros, and will in any case be replaced by our notion of uniform accretivity at zero.
\end{remark}
We now present our computational interpretation of the above theorem, which in particular replaces uniform continuity of $A$ with the following quantitative condition involving the Hausdorff-like predicate $H^\ast$ introduced in Definition \ref{def-haus}.
\begin{definition}
\label{def-mod-unifcont}
Let $A:D(A)\to 2^X$ be an operator. A function $\varpi:(0,\infty)\to (0,\infty)$ is called a \emph{modulus of uniform continuity} for $A$ if it satisfies
\begin{equation*}
\forall \varepsilon>0\; \forall x,y\in D(A)(\norm{x-y}\leq \varpi(\varepsilon)\to H^\ast[Ax,Ay,\varepsilon]).
\end{equation*}
\end{definition}
\begin{remark}
Note that given some $A:D(A)\to CB(X)$, if $\omega:(0,\infty)\to (0,\infty)$ is a traditional modulus of uniform continuity with respect to the Hausdorff metric, in that it satisfies
\begin{equation}
\label{eqn-modcont}
\forall \varepsilon>0\; \forall x,y\in D(A) (\norm{x-y}\leq \omega(\varepsilon)\to H(Ax,Ay)\leq \varepsilon),
\end{equation}
then $\varpi(\varepsilon):=\omega(\varepsilon/2)$ is a modulus of uniform continuity for $A$ in the sense of Definition \ref{def-mod-unifcont}. To see this, note that if $\norm{x-y}\leq \omega(\varepsilon/2)$ then $H(Ax,Ay)\leq\varepsilon/2<\varepsilon$, which implies that for any $u\in Ax$ there must exist some $v\in Ay$ with $\norm{u-v}<\varepsilon$, which is just $H^\ast[Ax,Ay,\varepsilon]$. However, possessing a modulus of uniform continuity $\varpi$ is more general than possessing some $\omega$ satisfying (\ref{eqn-modcont}), since in particular the latter only makes sense when $H(Ax,Ay)$ always exists, whereas our formulation allows us to drop assumptions about the range of $A$, and so in particular we do not require $A$ to always return nonempty closed values.
\end{remark}
\begin{theorem}
\label{thm-mn-quant}
Let $A:D(A)\to 2^X$ with $0\in Aq$ be uniformly accretive at zero with modulus $\Theta$, and in addition suppose that $A$ has a modulus of uniform continuity $\varpi:(0,\infty)\to (0,\infty)$. Assume that $R(I-A)$ is bounded. Let $(\alpha_n),(\beta_n)$ be sequences in $[0,\tfrac{1}{2})$ such that $\alpha_n,\beta_n\to 0$ with joint rate of convergence $\phi$ and $\sum_{i=0}^\infty\alpha_i=\infty$ with rate of divergence $r$. Suppose that $(x_n),(y_n),(u_n)$ and $(v_n)$ are sequences satisfying $x_n,y_n\in D(A)$ and 
\begin{equation*}
\begin{aligned}
x_{n+1}&=(1-\alpha_n)x_n+\alpha_nu_n, \ \ \ u_n\in (I-A)y_n\\
y_n&=(1-\beta_n)x_n+\beta_nv_n, \ \ \ v_n\in (I-A)x_n
\end{aligned}
\end{equation*}
Finally, suppose that $K_0,K_1\in (0,\infty)$ satisfy $\norm{w}<K_0$ for all $w\in R(I-A)$ and $\norm{x_0-q}<K_1$. Then $\norm{x_n-q}\to 0$ as $n\to\infty$ with rate of convergence
\begin{equation*}
\begin{aligned}
&\Phi_{\Theta,\varpi,\phi,r,K_0,K_1}(\varepsilon)\\
&:=r\left(\phi\left(\min\left\{\frac{1}{4},\frac{1}{6K}\min\left\{\frac{\Theta_K(\varepsilon)}{16K},\varpi\left(\frac{\Theta_K(\varepsilon)}{16K}\right)\right\}\right\}\right),\frac{K^2}{\Theta_K(\varepsilon)}\right)+1
\end{aligned}
\end{equation*}
for $K:=2K_0+K_1$.
\end{theorem}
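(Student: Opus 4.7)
The plan is to apply Lemma \ref{lem-tech} with $\theta_n:=\norm{x_n-q}^2$ and then, as in the proof of Theorem \ref{thm-arsii-quant}, convert the resulting rate of convergence for $\norm{x_n-q}^2\to 0$ into one for $\norm{x_n-q}\to 0$ via the substitution $\varepsilon\mapsto\varepsilon^2$. The first ingredient is a uniform bound $\norm{x_n-q}<K$ with $K:=2K_0+K_1$. From $0\in Aq$ we get $q\in(I-A)q\subseteq R(I-A)$, so $\norm{q}<K_0$, and hence every $w\in R(I-A)$ satisfies $\norm{w-q}<2K_0\leq K$. An induction exploiting that $x_{n+1}-q=(1-\alpha_n)(x_n-q)+\alpha_n(u_n-q)$ is a convex combination in which $\norm{u_n-q}<2K_0$ then yields $\theta_n<K^2$.

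The heart of the argument is a reformulation of the Ishikawa recursion. Writing $u_n=y_n-\bar u_n$ with $\bar u_n\in Ay_n$ and $v_n=x_n-\bar v_n$ with $\bar v_n\in Ax_n$, the identity $y_n-x_n=-\beta_n\bar v_n$ gives
\begin{equation*}
x_{n+1}-q=(x_n-q)-\alpha_n(\bar u_n+\beta_n\bar v_n).
\end{equation*}
Applying Lemma \ref{lem-subdif} with any $j\in J(x_{n+1}-q)$ then produces
\begin{equation*}
\theta_{n+1}\leq\theta_n-2\alpha_n\pair{\bar u_n,j}-2\alpha_n\beta_n\pair{\bar v_n,j}.
\end{equation*}

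Now fix $\varepsilon>0$, assume $\varepsilon<\theta_{n+1}$ so that $\sqrt{\varepsilon}<\norm{x_{n+1}-q}\leq K$, and set $\delta:=\Theta_K(\sqrt{\varepsilon})$. The crucial step is to use uniform continuity of $A$ to replace $\bar u_n\in Ay_n$ by some $\tilde u_n\in Ax_{n+1}$, which makes uniform accretivity at zero applicable at $x_{n+1}$. A direct computation gives
\begin{equation*}
\norm{y_n-x_{n+1}}=\norm{\alpha_n\bar u_n-(1-\alpha_n)\beta_n\bar v_n}\leq 2K(\alpha_n+\beta_n),
\end{equation*}
using the easy bounds $\norm{\bar u_n},\norm{\bar v_n}\leq 2K$, which follow from the uniform bound on $\norm{x_n-q}$ together with $\norm{u_n},\norm{v_n}\leq K_0$. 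Hence, provided $\alpha_n,\beta_n\leq \tfrac{1}{6K}\varpi(\delta/(16K))$, one has $\norm{y_n-x_{n+1}}\leq\varpi(\delta/(16K))$, so uniform continuity supplies $\tilde u_n\in Ax_{n+1}$ with $\norm{\tilde u_n-\bar u_n}\leq\delta/(16K)$. Uniform accretivity at zero applied to $(x_{n+1},\tilde u_n)$ then furnishes $j\in J(x_{n+1}-q)$ with $\pair{\tilde u_n,j}\geq\delta$, and consequently $\pair{\bar u_n,j}\geq\delta-\delta/16=15\delta/16$.

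To control the auxiliary $\bar v_n$ term, estimate $|\pair{\bar v_n,j}|\leq\norm{\bar v_n}K\leq 2K^2$, so that the further requirement $\beta_n\leq\tfrac{1}{6K}\cdot\tfrac{\delta}{16K}$ gives $|2\alpha_n\beta_n\pair{\bar v_n,j}|\leq\alpha_n\delta/24$. Combining both controls yields $\theta_{n+1}\leq\theta_n-\alpha_n\delta$, and Lemma \ref{lem-tech} then applies with $\varphi(\varepsilon):=\Theta_K(\sqrt{\varepsilon})$, bound $K^2$, and $N(\varepsilon):=\phi\bigl(\min\{\tfrac{1}{4},\tfrac{1}{6K}\min\{\delta/(16K),\varpi(\delta/(16K))\}\}\bigr)$, where the factor $\tfrac{1}{4}$ simultaneously preserves the standing assumption $\alpha_n,\beta_n\in[0,1/2)$. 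Substituting $\varepsilon^2$ for $\varepsilon$ in the resulting rate delivers precisely $\Phi_{\Theta,\varpi,\phi,r,K_0,K_1}$. The main obstacle is the bookkeeping: arranging the constants so that the uniform continuity error in $\bar u_n$ and the auxiliary $\bar v_n$ contribution both fit within the accretivity gain $\alpha_n\delta$, while the required smallness is expressible in a compact closed form involving $\varpi$ and $\Theta_K$.
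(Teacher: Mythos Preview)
Your argument is correct and arrives at exactly the stated rate. The algebraic route, however, differs from the paper's in a useful way.

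The paper writes $x_{n+1}-q=(1-\alpha_n)(x_n-q)+\alpha_n(u_n-q)$, applies Lemma~\ref{lem-subdif} to pick up a factor $(1-\alpha_n)^2=(1-2\alpha_n)+\alpha_n^2$, transfers $u_n\in(I-A)y_n$ to some $\bar v_{n+1}\in(I-A)x_{n+1}$ via uniform continuity, and then invokes the pseudocontraction Lemma~\ref{lem-pseudo}. This produces a $(1-2\alpha_n)\norm{x_{n+1}-q}^2$ term on the left, so one must divide by $1-2\alpha_n$; the constraint $\alpha_n\le 1/4$ is genuinely needed there to bound $1/(1-2\alpha_n)\le 2$. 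You instead first collapse the Ishikawa step to $x_{n+1}-q=(x_n-q)-\alpha_n(\bar u_n+\beta_n\bar v_n)$ with $\bar u_n\in Ay_n$, $\bar v_n\in Ax_n$, apply Lemma~\ref{lem-subdif}, and use uniform accretivity at zero directly on a nearby $\tilde u_n\in Ax_{n+1}$. This avoids any $(1-2\alpha_n)$ division, so in your argument the $1/4$ entry in $N(\varepsilon)$ is superfluous (though harmless for matching the stated rate). Your bound $\norm{y_n-x_{n+1}}\le 2K(\alpha_n+\beta_n)$ is also slightly sharper than the paper's $3K(\alpha_n+\beta_n)$.

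Two small points worth making explicit: the estimate $\norm{\bar u_n}\le 2K$ tacitly uses $\norm{y_n-q}<K$, which follows from the same convex-combination argument you gave for $x_{n+1}$; and the ``any $j$'' in your first application of Lemma~\ref{lem-subdif} is later specialised to the particular $j\in J(x_{n+1}-q)$ supplied by uniform accretivity, which is legitimate since Lemma~\ref{lem-subdif} holds for all such $j$.
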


\begin{proof}
We first show by induction that $\norm{x_n-q}<K$ for $K=2K_0+K_1$. For the base case we have $\norm{x_0-q}<K_1<K$, and for the induction step we calculate:
\begin{equation*}
\begin{aligned}
\norm{x_{n+1}-q}&=\norm{(1-\alpha_n)(x_n-q)+\alpha_n(u_n-q)}\\
&\leq (1-\alpha_n)\norm{x_n-q}+\alpha_n\norm{u_n-q}\\
&< (1-\alpha_n)K+\alpha_n K\\
&=K
\end{aligned}
\end{equation*}
where to establish $\norm{u_n-q}<K$ we use that $q\in (I-A)q$ and hence 
$q\in R(I-A)$, and - by assumption - $u_n\in R(I-A)$, from which we see that $\norm{u_n-q}\leq \norm{u_n}+\norm{q}<2K_0<K$. 
%
%
We are now also able to show that
\begin{equation}
\label{eqn-mn0}
\begin{aligned}
\norm{y_n-x_{n+1}}&=\norm{\alpha_nx_n-\beta_nx_n+\beta_nv_n-\alpha_nu_n}\\
&\leq \alpha_n\norm{x_n}+\beta_n\norm{x_n}+\beta_n\norm{v_n}+\alpha_n\norm{u_n}\\
&\leq 3(\alpha_n+\beta_n) K,
\end{aligned}
\end{equation}
where for the last step we use that $\norm{u_n},\norm{v_n}<K_0<K$ and $\norm{x_n}\leq \norm{x_n-q}+\norm{q}<K+K_0<2K$. Appealing to the joint rate of convergence $\phi$ for $\alpha_n,\beta_n\to 0$ we see that for $\tilde{\delta}\le 
\delta/6K$
\begin{equation}
\label{eqn-mn1}
\forall n\geq \phi(\tilde{\delta})(\norm{y_n-x_{n+1}}\leq \delta).
\end{equation}
For the remainder of the proof we fix some $\varepsilon>0$ and suppose that $\varepsilon< \norm{x_{n+1}-q}^2$ and thus $\norm{x_{n+1}-q}\in [\sqrt{\varepsilon},K]$. We now suppose that
\[ (*) \ n\geq N(\varepsilon):= \phi\left(\min\left\{\tfrac{1}{4},
\tfrac{\Theta_K(\sqrt{\varepsilon})}{96K^2},\tfrac{\varpi(\Theta_K(\sqrt{\varepsilon})/16K)}{6K}\right\}\right). \]
Then by (\ref{eqn-mn1}) we have for $\delta_0:=\Theta_K(\sqrt{\varepsilon})/16K$ 
\begin{equation*}
\label{eqn-mn1.5}
\norm{y_n-x_{n+1}}\leq \delta_0\mbox{ \ \ and \ \  }H^\ast[Ay_n,Ax_{n+1},\delta_0],
\end{equation*}
where the second property follows from the fact that $\varpi$ is a modulus of uniform continuity for $A$ and in addition $\norm{y_n-x_{n+1}}\leq \varpi(\delta_0)$. Now, since $u_n\in (I-A)y_n$ we have $u_n=y_n-\lambda_n$ for some $\lambda_n\in Ay_n$, and similarly $v_n=x_n-\sigma_n$ for some $\sigma_n\in Ax_n$. But since $H^\ast[Ay_n,Ax_{n+1},\delta_0]$ there must also be some $\bar{\sigma}_{n+1}\in Ax_{n+1}$ with $\norm{\lambda_n-\bar{\sigma}_{n+1}}\leq \delta_0$, and thus setting $\bar{v}_{n+1}:=x_{n+1}-\bar{\sigma}_{n+1}\in (I-A)x_{n+1}$ we have
\begin{equation}
\label{eqn-mn2}
\norm{u_n-\bar{v}_{n+1}}=\norm{y_n-\lambda_n-x_{n+1}+\bar{\sigma}_{n+1}}\leq\norm{y_n-x_{n+1}}+\norm{\lambda_n-\bar{\sigma}_{n+1}}\leq 2\delta_0.
\end{equation}
Now, using  Lemma \ref{lem-subdif} on $x_{n+1}-q=x+y$ for $x:=(1-\alpha_n)(x_n-q)$ and $y:=\alpha_n(u_n-q)$ we see that for any $j\in J(x_{n+1}-q)$ we have
\begin{equation}
\label{eqn-mn3}
\begin{aligned}
&\norm{x_{n+1}-q}^2\\
&\leq(1-\alpha_n)^2\norm{x_n-q}^2+2\alpha_n\pair{u_n-q,j}\\
&=(1-2\alpha_n)\norm{x_n-q}^2+\alpha_n^2\norm{x_n-q}^2+2\alpha_n\pair{u_n-q,j}\\
&\leq (1-2\alpha_n)\norm{x_n-q}^2+\alpha_n(\alpha_n K^2+2\pair{u_n-q,j})\\
&= (1-2\alpha_n)\norm{x_n-q}^2+\alpha_n(\alpha_n K^2+2\pair{\bar{v}_{n+1}-q,j}+2\pair{u_n-\bar{v}_{n+1},j})\\
&\leq (1-2\alpha_n)\norm{x_n-q}^2+\alpha_n(\alpha_n K^2+2\pair{\bar{v}_{n+1}-q,j}+4\delta_0K)
\end{aligned}
\end{equation}
where for the last step we use (\ref{eqn-mn2}) to establish
\begin{equation*}
\pair{u_n-\bar{v}_{n+1},j}\leq \norm{u_n-\bar{v}_{n+1}}\cdot \norm{x_{n+1}-q}\leq 2\delta_0\cdot K.
\end{equation*}
Now, since $\norm{x_{n+1}-q}\in [\sqrt{\varepsilon},K]$, by Lemma \ref{lem-pseudo} there is some $j\in J(x_{n+1}-q)$ such that
\begin{equation*}
\pair{\bar{v}_{n+1}-q,j}\leq \norm{x_{n+1}-q}^2-\Theta_K(\sqrt{\varepsilon})
\end{equation*}
and substituting this into (\ref{eqn-mn3}) we obtain
\begin{equation*}
(1-2\alpha_n)\norm{x_{n+1}-q}^2\leq (1-2\alpha_n)\norm{x_n-q}^2+\alpha_n(\alpha_n K^2-2\Theta_K(\sqrt{\varepsilon})+4\delta_0K).
\end{equation*}
Dividing both sides by $(1-2\alpha_n)>0$ we get
\begin{equation*}
\label{eqn-mn4}
\begin{aligned}
\norm{x_{n+1}-q}^2&\leq \norm{x_n-q}^2-\left(\frac{2\alpha_n}{1-2\alpha_n}\right)\Theta_K(\sqrt{\varepsilon})+\left(\frac{\alpha_nK}{1-2\alpha_n}\right)(\alpha_n K+4\delta_0)\\
&\leq \norm{x_n-q}^2-2\alpha_n\Theta_K(\sqrt{\varepsilon})+\left(\frac{\alpha_nK}{1-2\alpha_n}\right)(\alpha_n K+4\delta_0)
\end{aligned}
\end{equation*}
and therefore
\begin{equation}
\label{eqn-mn5}
\norm{x_{n+1}-q}^2\leq \norm{x_n-q}^2-\alpha_n(2\Theta_K(\sqrt{\varepsilon})- \delta_1)
\end{equation}
for
\begin{equation*}
\delta_1:=\left(\frac{K}{1-2\alpha_n}\right)(\alpha_n K+4\delta_0).
\end{equation*}
Now $(*)$ also implies that (using that $\alpha_n\leq 1/4$ implies 
$1/(1-2\alpha_n)\leq 2$)
\begin{equation*}
\delta_1\leq 2K\left(\left(\frac{\Theta_K(\sqrt{\varepsilon})}{96K^2}\right) K+4\left(\frac{\Theta_K(\sqrt{\varepsilon})}{16K}\right)\right)<\Theta_K(\sqrt{\varepsilon})
\end{equation*}
and thus by (\ref{eqn-mn5}), under the assumption that $\varepsilon\leq\norm{x_{n+1}-q}^2$ we have shown that 
\begin{equation*}
\norm{x_{n+1}-q}^2\leq \norm{x_n-q}^2-\alpha_n\cdot\varphi(\varepsilon)
\end{equation*}
for $\varphi(\varepsilon):=\Theta_K(\sqrt{\varepsilon})$ and for all $n
\ge N(\varepsilon)$, where $N(\varepsilon)$ is defined in $(*)$. 
\\ We can now apply Lemma \ref{lem-tech} for $\theta_n:=\norm{x_n-q}^2$ and $(\alpha_n)$. Conditions (\ref{item-techi}) and (\ref{item-techii}) are satisfied for $K^2$ and $r$ respectively, and we have established condition (\ref{item-techiii}) for $\varphi$ and $N$ as defined above. The stated rate of convergence is then obtained directly from the rate of convergence for $\norm{x_n-q}^2$ given by the lemma, which as before is converted to one for $\norm{x_n-q}$ by substituting $\varepsilon^2$ for $\varepsilon$. 
\end{proof}

\section{An Ishikawa-type scheme for uniformly smooth spaces}
\label{sec-lin}

Our final application concerns another Ishikawa-type scheme, but in contrast to the previous section, uniform continuity of $A$ is now exchanged for uniform smoothness of the underlying space. This results in a somewhat different approach for establishing strong convergence, but is nevertheless still subsumed under our general framework. Convergence results pertaining to Ishikawa-type schemes based on accretive operators in uniformly smooth spaces can be found in several places in the literature. The quantitative result presented here is based on an extension of \cite[Theorem 4.2]{Chang} due to Lin \cite[Theorem 2.1]{Lin(2004.0)}, the latter involving an Ishikawa-type scheme based on two accretive operators. We first establish a quantitative version of the notion of uniform smoothness.
\begin{definition}
\label{def-unifsmooth}
A Banach space $X$ is uniformly smooth if for all $\varepsilon>0$ there exists some $\delta>0$ such that
\begin{equation*}
(\ast) \ \ \ \forall x,y\in X(\norm{x}=1\wedge \norm{y}\leq\delta\to \norm{x+y}+\norm{x-y}\leq 2+\varepsilon\norm{y}).
\end{equation*}
Any function $\tau:(0,\infty)\to (0,\infty)$ such that $\delta=\tau(\varepsilon)$ satisfies $(\ast)$ is called a \emph{modulus of uniform smoothness} for $X$.
\end{definition}
It is well-known that in uniformly smooth spaces, the normalized duality mapping $J$ is single-valued and uniformly continuous. A quantitative formulation of this fact follows directly from Proposition 2.5 of \cite{KohLeu(2012.1)} (note 
that we use here the notion of `modulus of continuity' from computable 
analysis which differs from the modulus $\alpha$ defined in the current  
context at the beginning of section 2 of \cite{Reich(1978)} which does 
not provide a rate of convergence for $\lim\limits_{t\to 0^+} \alpha(t)=0$):
\begin{lemma}[\cite{KohLeu(2012.1)}]
\label{lem-unifsmooth}
Let $X$ be uniformly smooth with modulus $\tau$. Define $\omega_\tau:(0,\infty)\times (0,\infty)\to (0,\infty)$ by
\begin{equation*}
\omega_\tau(d,\varepsilon):=\frac{\varepsilon^2}{12d}\cdot \tau\left(\frac{\varepsilon}{2d}\right), \ \ \ \varepsilon\in (0,2], d\geq 1
\end{equation*}
with $\omega_\tau(d,\varepsilon):=\omega_\tau(1,\varepsilon)$ for $d<1$ and $\omega_\tau(d,\varepsilon):=\omega_\tau(d,2)$ for $\varepsilon>2$. Then the single-valued duality map $J:X\to X^\ast$ is norm-to-norm uniformly continuous on bounded subsets with modulus $\omega_\tau$, that is, for all $d,\varepsilon>0$ and $x,y\in X$ with $\norm{x},\norm{y}\leq d$ we have
\begin{equation*}
\norm{x-y}\leq\omega_\tau(d,\varepsilon)\to \norm{Jx-Jy}\leq \varepsilon.
\end{equation*}
\end{lemma}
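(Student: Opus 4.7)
Since the lemma is stated as a direct citation from \cite{KohLeu(2012.1)}, the first step of my plan is to verify that the notion of \emph{modulus of uniform smoothness} in Definition \ref{def-unifsmooth} matches (or can be easily reconciled with) the one used in that reference. If there is a constant-factor mismatch, a short translation lemma will convert our $\tau$ into the form required there; the resulting constants in $\omega_\tau$ (the $12$ in the denominator and the rescaling by $2d$) would then appear naturally from this translation. Once the definitions agree, the result is exactly Proposition 2.5 of \cite{KohLeu(2012.1)}.

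For the reader who wants a self-contained argument, the underlying proof proceeds as follows. First, uniform smoothness (with any modulus) implies that $J$ is single-valued, because the norm is Fréchet differentiable and $Jx = \tfrac{1}{2}\nabla\norm{\cdot}^2(x)$. Next, for $x,y$ with $\norm{x},\norm{y}\leq d$, one reduces to the case $\norm{x}=1$ by scaling. The key computation uses the smoothness inequality $\norm{u+tz}+\norm{u-tz}\leq 2\norm{u}+\varepsilon t$ (the rescaled form of $(\ast)$) to get a quantitative bound on the modulus of continuity of the one-sided directional derivatives $\tau_+(u,z)$ of the norm; since these derivatives realise $\pair{Ju,z}/\norm{u}$, varying $z$ over the unit sphere translates this into a bound on $\norm{Ju-Jv\!\!\!}$ in terms of $\norm{u-v}$.

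The main obstacle is the bookkeeping: the explicit form of $\omega_\tau$ comes from tracking constants through the scaling step (going from radius $d$ down to $1$), the translation from $\tau$ to a modulus of the form $\rho_X(t)/t$, and an application of the Cauchy--Schwarz-type estimate $\pair{Jx-Jy,z}\leq \norm{Jx-Jy}\cdot\norm{z}$. The factors $1/12$ and $\varepsilon/2d$ arise from combining these three rescalings, which is why the statement has its somewhat unwieldy form. Since the precise computation is carried out in \cite[Proposition 2.5]{KohLeu(2012.1)}, my plan is simply to invoke that proposition, observing additionally that the boundary conventions $\omega_\tau(d,\varepsilon):=\omega_\tau(1,\varepsilon)$ for $d<1$ and $\omega_\tau(d,\varepsilon):=\omega_\tau(d,2)$ for $\varepsilon>2$ are trivially valid by monotonicity of the modulus in $d$ and by the fact that $\norm{Jx-Jy}\leq\norm{x}+\norm{y}\leq 2d\leq\varepsilon$ whenever $\varepsilon>2$ and $d\leq 1$.
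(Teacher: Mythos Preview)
Your approach matches the paper exactly: the paper gives no proof of this lemma at all, simply citing it as Proposition~2.5 of \cite{KohLeu(2012.1)}, which is precisely what you propose to do. Your additional self-contained sketch and discussion of the boundary conventions go beyond what the paper offers; one small quibble is that your final sentence only justifies the case $\varepsilon>2$ together with $d\leq 1$, whereas the two boundary conventions should be checked separately (for $d<1$ use that $\norm{x},\norm{y}\leq d<1$ already puts you in the $d=1$ ball, and for $\varepsilon>2$ use that the $\varepsilon=2$ case yields $\norm{Jx-Jy}\leq 2<\varepsilon$).
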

\begin{theorem}[Theorem 2.1 of \cite{Lin(2004.0)} (cf. Remark 2.2)]
Let $X$ be uniformly smooth, and $A_1,A_2:D\to 2^D$ be two uniformly $\phi$-accretive operators for $D$ a nonempty, closed and convex subset of $X$, such that the ranges of $(I-A_1)$ and $(I-A_2)$ are bounded. Let $(\alpha_n),(\beta_n)$ be sequences in $[0,1)$ such that $\alpha_n,\beta_n\to 0$ and $\sum_{i=0}^\infty\alpha_i=\infty$. For any $f,x_0\in D$ let $(x_n)$ be generated via the Ishikawa-type scheme
\begin{equation*}
\begin{aligned}
x_{n+1}&=(1-\alpha_n)x_n+\alpha_n(f+u_n), \ \ \ u_n\in (I-A_1)y_n\\
y_n&=(1-\beta_n)x_n+\beta_n(f+v_n), \ \ \ v_n\in (I-A_2)x_n.
\end{aligned}
\end{equation*}
Then whenever the system of operator equations $\begin{cases}f\in A_1q\\ f\in A_2q\end{cases}$ has some solution $q\in D$, then $(x_n)$ converges strongly to $q$.
\end{theorem}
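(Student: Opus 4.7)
The plan is to prove a quantitative refinement in the spirit of Theorem \ref{thm-mn-quant}, applying Lemma \ref{lem-tech} to $\theta_n := \norm{x_n - q}^2$. The first preparatory step is a shift: define $\tilde{A}_i x := A_i x - f$ for $i = 1,2$. Each $\tilde{A}_i$ remains uniformly $\phi$-accretive but now with $0 \in \tilde{A}_i q$, so by the observation following Definition \ref{def-unacc} both admit the modulus of uniform accretivity at zero $\Theta_K(\varepsilon) := \inf\{\phi(x) \; : \; x \in [\varepsilon, \max\{\varepsilon, K\}]\}$. Rewriting $f + u_n = y_n - (\bar{u}_n - f)$ with $\bar{u}_n \in A_1 y_n$ (so that $\bar{u}_n - f \in \tilde{A}_1 y_n$), and likewise on the inner iterate, the scheme takes the Ishikawa form familiar from Section \ref{sec-mn} with $\tilde{u}_n := f + u_n \in (I - \tilde{A}_1) y_n$ and $\tilde{v}_n := f + v_n \in (I - \tilde{A}_2) x_n$.

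Next I would establish by induction a uniform bound $\norm{x_n - q} < K$ expressed in terms of $\norm{x_0 - q}$, $\norm{q}$, $\norm{f}$, and the bounds on $R(I - A_1)$ and $R(I - A_2)$, so that all of $\norm{x_n-q},\norm{y_n-q},\norm{\tilde{u}_n-q},\norm{\tilde{v}_n-q}$ are $\le K$. As in (\ref{eqn-mn0}), $\norm{y_n - x_{n+1}} \leq 3(\alpha_n + \beta_n) K$, so a joint rate of convergence $\phi$ for $\alpha_n,\beta_n \to 0$ drives $\norm{y_n - x_{n+1}}$ below any prescribed threshold once $n$ passes a suitable function of $\phi$.

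The core argument applies Lemma \ref{lem-subdif} to $x_{n+1} - q = (1-\alpha_n)(x_n - q) + \alpha_n(\tilde{u}_n - q)$, yielding for the (single-valued, since $X$ is uniformly smooth) functional $j_n := J(x_{n+1} - q)$
\begin{equation*}
\norm{x_{n+1} - q}^2 \leq (1-\alpha_n)^2 \norm{x_n - q}^2 + 2\alpha_n \pair{\tilde{u}_n - q, j_n}.
\end{equation*}
In contrast to Theorem \ref{thm-mn-quant}, $\tilde{A}_1$ is not assumed uniformly continuous, so I cannot transport $\tilde{u}_n \in (I - \tilde{A}_1) y_n$ to a nearby element of $(I - \tilde{A}_1) x_{n+1}$. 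Instead, I would transport the duality \emph{functional}: setting $j'_n := J(y_n - q)$, I split
\begin{equation*}
\pair{\tilde{u}_n - q, j_n} = \pair{\tilde{u}_n - q, j'_n} + \pair{\tilde{u}_n - q, j_n - j'_n}.
\end{equation*}
Under the assumption $\norm{x_{n+1}-q} > \sqrt{\varepsilon}$, which combined with $\norm{y_n - x_{n+1}} \le \sqrt{\varepsilon}/2$ forces $\norm{y_n - q} \in [\sqrt{\varepsilon}/2, K]$, Lemma \ref{lem-pseudo} applied to $\tilde{A}_1$ bounds the first summand by $\norm{y_n - q}^2 - \Theta_K(\sqrt{\varepsilon}/2)$; the second is dominated in absolute value by $2K\cdot\norm{j_n - j'_n}$, which Lemma \ref{lem-unifsmooth} makes arbitrarily small once $\norm{y_n-x_{n+1}}\le \omega_\tau(K,\cdot)$ evaluated at a suitable target error.

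Combining these estimates with the routine bound $\norm{y_n - q}^2 \leq \norm{x_n - q}^2 + C\beta_n$ for a constant $C$ depending on $K$, and absorbing the $\alpha_n^2 \norm{x_n - q}^2$ arising from $(1-\alpha_n)^2$, the recursion reduces to
\begin{equation*}
\norm{x_{n+1} - q}^2 \leq \norm{x_n - q}^2 - \alpha_n \Theta_K(\sqrt{\varepsilon}/2)
\end{equation*}
for all $n \geq N(\varepsilon)$, where $N$ is assembled from $\phi$, the modulus $\tau$ of uniform smoothness (via $\omega_\tau$) and $\Theta$. Lemma \ref{lem-tech} applied to $\theta_n = \norm{x_n - q}^2$ with uniform bound $K^2$ and rate of divergence $r$ then yields a rate of convergence for $\norm{x_n - q}^2 \to 0$, converted to one for $\norm{x_n - q} \to 0$ by substituting $\varepsilon^2$ for $\varepsilon$ exactly as in the proof of Theorem \ref{thm-mn-quant}. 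The main obstacle will be the bookkeeping: the three residual contributions---the $\alpha_n^2$-term from $(1-\alpha_n)^2$, the $\beta_n$-drift of $y_n$ from $x_n$, and the duality-mapping error $\norm{j_n - j'_n}$ fed through uniform smoothness---must each be squeezed below a fixed fraction of $\alpha_n \Theta_K(\sqrt{\varepsilon}/2)$, and it is precisely this juggling that determines the shape of $N(\varepsilon)$ and hence of the final rate.
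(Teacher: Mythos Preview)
Your proposal is correct and follows the same architecture as the paper's proof of Theorem \ref{thm-lin-quant}: reduce to $f=0$, establish the uniform bound $K$, apply Lemma \ref{lem-subdif} to $x_{n+1}-q$, transport the duality functional from $J(x_{n+1}-q)$ to $j'_n=J(y_n-q)$ and control the discrepancy via Lemma \ref{lem-unifsmooth}, invoke Lemma \ref{lem-pseudo} on $\tilde u_n\in(I-\tilde A_1)y_n$ once $\norm{y_n-q}\in[\sqrt{\varepsilon}/2,K]$, and close with Lemma \ref{lem-tech}. The one substantive deviation is your handling of $\norm{y_n-q}^2$: you obtain $\norm{y_n-q}^2\le\norm{x_n-q}^2+C\beta_n$ straight from the triangle inequality and the range bound, whereas the paper applies Lemma \ref{lem-subdif} a second time to $y_n-q$, splits the pairing via $j_n=J(x_n-q)$, and uses accretivity of $A_2$ to get $\pair{v_n-q,j_n}\le\norm{x_n-q}^2$, picking up an extra error $d_n=\norm{j'_n-j_n}$ that is then bounded crudely by $2K$. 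Your shortcut is cleaner and, as a side effect, shows that accretivity of $A_2$ plays no role in the convergence argument itself --- only the boundedness of $R(I-A_2)$ is used --- which slightly sharpens the paper's own Remark following Theorem \ref{thm-lin-quant}.
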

We now present a quantitative analysis of the above result, where for simplicity we set $f=0$.
\begin{theorem}
\label{thm-lin-quant}
Let $X$ be uniformly smooth with modulus $\tau$, and $A_1,A_2:D\to 2^X$ with $0\in A_iq$ for $i=1,2$ be uniformly accretive at zero, with $\Theta$ a modulus of uniform accretivity for $A_1$. Let $(\alpha_n),(\beta_n)$ be sequences in $[0,1)$ such that $\alpha_n,\beta_n\to 0$ with joint rate of convergence $\phi$ and $\sum_{i=0}^\infty\alpha_i=\infty$ with rate of divergence $r$. Suppose that $(x_n)$, $(y_n)$, $(u_n)$ and $(v_n)$ are sequences satisfying $x_n,y_n\in D(A)$ and
\begin{equation*}
\begin{aligned}
x_{n+1}&=(1-\alpha_n)x_n+\alpha_nu_n, \ \ \ u_n\in (I-A_1)y_n\\
y_n&=(1-\beta_n)x_n+\beta_nv_n, \ \ \ v_n\in (I-A_2)x_n.
\end{aligned}
\end{equation*}
Finally, suppose that $K_0,K_1\in (0,\infty)$ satisfy $\norm{w}<K_0$ for all $w\in R(I-A_i)$ for $i=1,2$ and $\norm{x_0-q}<K_1$. Then $\norm{x_n-q}\to 0$ as $n\to\infty$ with rate of convergence
\begin{equation*}
\begin{aligned}
&\Phi_{\Theta,\tau,\phi,r,K_0,K_1}(\varepsilon)\\
&:=r\left( \phi\left(\frac{1}{6K}\min\left\{\frac{\varepsilon}{2},\frac{3\Theta_K(\varepsilon/2)}{32K},\omega_\tau\left(K,\frac{\Theta_K(\varepsilon/{2})}{16K}\right)\right\}\right),\frac{K^2}{\Theta_K(\varepsilon/2)} \right)+1
\end{aligned}
\end{equation*}
for $K:=2K_0+K_1$ and $\omega_\tau$ as defined in Lemma \ref{lem-unifsmooth}.
\end{theorem}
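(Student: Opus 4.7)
The plan is to follow the template of Theorem \ref{thm-mn-quant}, applying Lemma \ref{lem-tech} to $\theta_n := \norm{x_n-q}^2$ and then substituting $\varepsilon^2$ for $\varepsilon$ at the end to obtain a rate for $\norm{x_n-q}\to 0$. The essential novelty is that in place of the uniform continuity of $A$ (which was used to relate elements of $A y_n$ and $A x_{n+1}$), we exploit single-valuedness of $J$ in uniformly smooth spaces together with its uniform continuity on bounded sets, via Lemma \ref{lem-unifsmooth}, to relate $J(y_n-q)$ and $J(x_{n+1}-q)$.

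First I show by induction that $\norm{x_n-q}<K$ for $K:=2K_0+K_1$, using the same convex-combination argument as in Theorem \ref{thm-mn-quant} together with $u_n\in R(I-A_1)$ and $q\in R(I-A_1)$. Since $v_n\in R(I-A_2)$ gives $\norm{v_n-q}<K_0+K_1\leq K$, the convex combination defining $y_n$ yields $\norm{y_n-q}<K$ as well, so we may invoke Lemma \ref{lem-unifsmooth} at $d=K$. The same triangle-inequality computation as before gives $\norm{y_n-x_{n+1}}\leq 3(\alpha_n+\beta_n)K$, so that via $\phi$ this quantity can be made smaller than any prescribed threshold divided by $6K$.

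Next, fixing $\varepsilon>0$ and assuming $\norm{x_{n+1}-q}^2>\varepsilon$ (so $\norm{x_{n+1}-q}>\sqrt{\varepsilon}$), I apply Lemma \ref{lem-subdif} to the decomposition $x_{n+1}-q=(1-\alpha_n)(x_n-q)+\alpha_n(u_n-q)$ to obtain
\[
\norm{x_{n+1}-q}^2\leq (1-\alpha_n)^2\norm{x_n-q}^2+2\alpha_n\pair{u_n-q,J(x_{n+1}-q)}.
\]
Once $\norm{y_n-x_{n+1}}\leq \sqrt{\varepsilon}/2$, we have $\norm{y_n-q}\in[\sqrt{\varepsilon}/2,K]$, so Lemma \ref{lem-pseudo} applied to $u_n\in (I-A_1)y_n$ and the single-valued $J(y_n-q)$ gives $\pair{u_n-q,J(y_n-q)}\leq \norm{y_n-q}^2-\Theta_K(\sqrt{\varepsilon}/2)$. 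Taking $n$ further large enough that $\norm{y_n-x_{n+1}}\leq \omega_\tau(K,\Theta_K(\sqrt{\varepsilon}/2)/(16K))$ makes $\norm{J(x_{n+1}-q)-J(y_n-q)}\leq \Theta_K(\sqrt{\varepsilon}/2)/(16K)$, and since $\norm{u_n-q}<K$ the resulting transfer error in the inner product is at most $\Theta_K(\sqrt{\varepsilon}/2)/16$.

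Combining these bounds, controlling $\norm{y_n-q}^2-\norm{x_n-q}^2$ by a multiple of $\beta_n K^2$ and absorbing the $\alpha_n^2 K^2$ term from expanding $(1-\alpha_n)^2$, one arrives at $\norm{x_{n+1}-q}^2\leq \norm{x_n-q}^2-\alpha_n\cdot\Theta_K(\sqrt{\varepsilon}/2)$ for all $n\geq N(\varepsilon)$, where $N(\varepsilon)$ is $\phi$ applied to the minimum of the three thresholds above, each divided by $6K$. Lemma \ref{lem-tech} with $\varphi(\varepsilon):=\Theta_K(\sqrt{\varepsilon}/2)$ and upper bound $K^2$ then delivers a rate for $\norm{x_n-q}^2\to 0$, which under $\varepsilon\mapsto \varepsilon^2$ becomes precisely $\Phi_{\Theta,\tau,\phi,r,K_0,K_1}$. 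The main technical obstacle is the balancing step: choosing the constants in $N(\varepsilon)$ so that the $J$-transfer error $\Theta_K/16$, the $\alpha_n^2 K^2$ term, and the $\alpha_n\beta_n K^2$ term are jointly dominated by a fixed fraction of $\alpha_n\Theta_K(\sqrt{\varepsilon}/2)$; the specific numerical thresholds $3\Theta_K(\varepsilon/2)/(32K)$ and $\Theta_K(\varepsilon/2)/(16K)$ appearing inside $\phi$ and $\omega_\tau$ are dictated by this calibration.
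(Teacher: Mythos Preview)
Your proposal is correct and follows essentially the same route as the paper: the subdifferential inequality applied to $x_{n+1}-q$, the transfer $J(x_{n+1}-q)\to J(y_n-q)$ via Lemma~\ref{lem-unifsmooth}, Lemma~\ref{lem-pseudo} for $A_1$ once $\norm{y_n-q}\in[\sqrt{\varepsilon}/2,K]$, and then Lemma~\ref{lem-tech} with $\varphi(\varepsilon)=\Theta_K(\sqrt{\varepsilon}/2)$.

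The one point of departure is your treatment of $\norm{y_n-q}^2$. You bound $\norm{y_n-q}^2-\norm{x_n-q}^2$ directly by a multiple of $\beta_nK^2$ (via the triangle inequality on the convex combination), whereas the paper applies Lemma~\ref{lem-subdif} a second time to $y_n-q$, invokes the accretivity of $A_2$ at zero to get $\pair{v_n-q,J(x_n-q)}\leq\norm{x_n-q}^2$, and introduces a further duality error term $d_n=\norm{J(y_n-q)-J(x_n-q)}$ which is then bounded trivially by $2K$. Both routes produce an $O(\beta_nK^2)$ error and feed into the same calibration; your shortcut is actually slightly more elementary and does not make essential use of the accretivity of $A_2$, but it does not change the final rate. (A small slip: $\norm{v_n-q}<2K_0\le K$ follows from $\norm{q}<K_0$ via $q\in R(I-A_2)$, not from $\norm{q}<K_1$.)
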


\begin{proof}
To begin with, we claim that $\norm{x_n-q},\norm{y_n-q},
\norm{u_n-q},\norm{v_n-q}
<K:=2K_0+K_1$ for all $n\in\NN$ and moreover $\norm{y_n-x_{n+1}}\leq 3(\alpha_n+\beta_n)K<6K$, and therefore $\norm{y_n-x_{n+1}}\leq\delta$ for any 
$n\geq\phi(\tilde{\delta})$ with $\tilde{\delta}\leq \delta/6K.$ 
All of this is established entirely analogously to the beginning of the proof of Theorem \ref{thm-mn-quant}, which uses just the Ishikawa equations together with basic properties of normed spaces: Note our generalisation of the Ishikawa-type scheme to two maps is dealt with by the assumption that $K_0$ is a joint bound for the ranges of $(I-A_1)$ and $(I-A_2)$. 

Let us now define $j_n:=J(x_n-q)$ and $j'_n:=J(y_n-q)$ for each $n\in\NN$. By an application of Lemma \ref{lem-subdif} we have
\begin{equation}
\label{eqn-lin0}
\begin{aligned}
\norm{x_{n+1}-q}^2&\leq (1-\alpha_n)^2\norm{x_n-q}^2+2\alpha_n\pair{u_n-q,j_{n+1}}\\
&\leq (1-\alpha_n)^2\norm{x_n-q}^2+2\alpha_n\pair{u_n-q,j'_n}+2\alpha_n\pair{u_n-q,j_{n+1}-j'_n}\\
&\leq (1-\alpha_n)^2\norm{x_n-q}^2+2\alpha_n\pair{u_n-q,j'_n}+2\alpha_n Kc_n
\end{aligned}
\end{equation}
for $c_n:=\norm{j_{n+1}-j'_n}$, where for the last step we use $\pair{u_n-q,j_{n+1}-j'_n}\leq \norm{u_n-q}\cdot \norm{j_{n+1}-j'_n}\leq Kc_n$. An analogous calculation yields
\begin{equation}
\label{eqn-lin1}
\begin{aligned}
\norm{y_n-q}^2&\leq (1-\beta_n)^2\norm{x_n-q}^2+2\beta_n\pair{v_n-q,j'_n}\\
&\leq (1-\beta_n)^2\norm{x_n-q}^2+2\beta_n\pair{v_n-q,j_n}+2\beta_nKd_n
\end{aligned}
\end{equation}
for $d_n:=\norm{j'_n-j_n}$. Now, by accretivity of $A_2$ at zero we have $\pair{v_n-q,j_n}\leq\norm{x_n-q}^2$ and substituting this into (\ref{eqn-lin1}) we get
\begin{equation}
\label{eqn-lin2}
\norm{y_n-q}^2\leq (1+\beta^2_n)\norm{x_n-q}^2+2\beta_nKd_n.
\end{equation}
For the remainder of the proof we fix some $\varepsilon>0$ and suppose that $\varepsilon<\norm{x_{n+1}-q}^2$. We now suppose that
\[ (*)\ n\geq N(\varepsilon):=
\phi\left(\min\left\{\tfrac{\sqrt{\varepsilon}}{12K}, 
\tfrac{\Theta_K(\sqrt{\varepsilon}/2)}{64K^2},
\omega_{\tau}\left(K,\tfrac{\Theta_K(\sqrt{\varepsilon}/2)}{16K}\right)/6K\right\}\right).\] 
Then, in particular, 
$\norm{y_n-x_{n+1}}\leq\sqrt{\varepsilon}/2$. This then implies that
\begin{equation*}
\sqrt{\varepsilon}<\norm{x_{n+1}-q}\leq \norm{x_{n+1}-y_n}+\norm{y_n-q}\leq \sqrt{\varepsilon}/2+\norm{y_n-q}
\end{equation*}
and so $\norm{y_n-q}\in [\sqrt{\varepsilon}/2,K]$. By Lemma \ref{lem-pseudo} we then have $\pair{u_n-q,j'_n}\leq\norm{y_n-q}^2-\Theta_K(\sqrt{\varepsilon}/2)$, and thus using (\ref{eqn-lin2}):
\begin{equation*}
\label{eqn-lin3}
\pair{u_n-q,j'_n}\leq (1+\beta^2_n)\norm{x_n-q}^2+2\beta_nKd_n-\Theta_K(\sqrt{\varepsilon}/2).
\end{equation*}
Finally, substituting this into (\ref{eqn-lin0}) we obtain
\begin{equation}
\label{eqn-lin4}
\begin{aligned}
&\norm{x_{n+1}-q}^2\\
&\leq (1+\alpha_n^2)\norm{x_n-q}^2+2\alpha_n\beta^2_n\norm{x_n-q}^2+4\alpha_n\beta_nKd_n-2\alpha_n\Theta_K(\sqrt{\varepsilon}/2)+2\alpha_n Kc_n\\
&<\norm{x_n-q}^2+\alpha_n^2K^2+2\alpha_n\beta^2_nK^2+4\alpha_n\beta_nKd_n-2\alpha_n\Theta_K(\sqrt{\varepsilon}/2)+2\alpha_n Kc_n\\
&=\norm{x_n-q}^2-\alpha_n\cdot \delta
\end{aligned}
\end{equation}
for
\begin{equation*}
\delta:=2\Theta_K(\sqrt{\varepsilon}/2)-(\alpha_nK^2+2\beta^2_nK^2+4\beta_nKd_n+2 Kc_n).
\end{equation*}
Define $\delta_0:=\Theta_K(\sqrt{\varepsilon}/2)/8.$ 
Then $(*)$ implies $\alpha_n,\beta_n\leq \delta_0/8K^2$ and so in turn 
$\alpha_nK^2<\delta_0$, $2\beta_n^2 K^2<\delta_0$ (using $\beta_n<1$), and $4\beta_nKd_n< \delta_0$. For the latter note that 
\begin{equation*}
d_n=\norm{j'_n-j_n}\leq\norm{j_n}+\norm{j'_n}=\norm{x_n-q}+\norm{y_n-q}<2K.
\end{equation*}
Finally, let $\omega_\tau$ be defined as in Lemma \ref{lem-unifsmooth}. Then 
again by $(*)$ 
we have $\norm{(x_{n+1}-q)-(y_n-q)}=\norm{x_{n+1}-y_n}\leq \omega_\tau(K,\delta_0/2K)$ and thus by Lemma \ref{lem-unifsmooth} it follows that $c_n=\norm{j_{n+1}-j'_n}\leq \delta_0/2K$ and thus $2Kc_n\leq\delta_0$. 

Putting all this together we conclude that if $\varepsilon <\norm{x_{n+1}-q}^2$ and $n\ge N(\varepsilon)$ we have $\delta\geq \Theta_K(\sqrt{\varepsilon}/2)$ and thus by (\ref{eqn-lin4}):
\begin{equation*}
\norm{x_{n+1}-q}^2\leq\norm{x_n-q}^2-\alpha_n\cdot \varphi(\varepsilon)
\end{equation*}
for $\varphi(\varepsilon):=\Theta_K(\sqrt{\varepsilon}/2)$. 
\\ 
We can now apply Lemma \ref{lem-tech} as in the proof of Theorem \ref{thm-mn-quant} for $\theta_n:=\norm{x_n-q}^2$ on parameters $K^2,r,\varphi$ and $N$ to obtain the stated rate of convergence.
\end{proof}

\begin{remark}
The precise statement of Theorem \ref{thm-lin-quant} is consistent with Remark 2.2 of \cite{Lin(2004.0)}, in that we only require a \emph{modulus} of uniform accretivity for one of the operators (though we require both to be accretive).
\end{remark}

\section{Acknowledgement}
This work has been supported by the German Science Foundation DFG 
(Project KO 1737/6-1).


\begin{thebibliography}{1}

\bibitem{AlbReiSho(2002.0)}
Y.~Alber, S.~Reich, and D.~Shoikhet.
\newblock Iterative approximations of null points of uniformly accretive
  operators with estimates of the convergence rate.
\newblock {\em Communications in Applied Analysis}, 6(1):89--104, 2002.

\bibitem{Barbu(76)} 
V.~Barbu. Nonlinear semigroups and differential equations in Banach spaces, 
Noordhoff International Publishing, Leyden, The Netherlands, 1976.

\bibitem{Barbu(10)} 
V.~Barbu. Nonlinear differential equations of monotone types in Banach spaces.
Springer Monographs in Mathematics, x+272pp., Springer-Verlag, 2010. 

\bibitem{BauschkeCombettes}
H.H.~Bauschke, P.L.~Combettes, Convex Analysis and Monotone Operator Theory in Hilbert Spaces, Springer, 
New York-Dordrecht-Heidelberg-London, 2010. 

\bibitem{Bauschkeetal}
H.~H.~Bauschke, E.~Matou\v{s}kov\'{a}, S.~Reich. \newblock 
Projection and proximal point methods: convergence results and
counterexamples. \newblock {\em Nonlinear Analysis}, 50:715--738, 2004.


\bibitem{Browder}
F.~E.~Browder, \newblock
Nonlinear accretive operators in Banach spaces. \newblock
{\em Bulletin of the American Mathematical Society}, 73:470--476, 1967.

\bibitem{BruckReich}
R.~E.~Bruck, S.~Reich.\newblock 
Nonexpansive projections and resolvents of accretive operators in
Banach spaces. \newblock {\em Houston Journal of Mathematics}, 3:459--470, 
1977. 


\bibitem{Chang} 
S.S.~Chang. \newblock 
On Chidume's open questions and approximate solution of 
multi-valued strongly accretive mapping equations in Banach spaces. 
\newblock {\em Journal of Mathematical Analysis and Applications}, 
216:94--111, 1994.

\bibitem{GarciaFalcet}
J. Garc\'ia-Falset. \newblock
The asymptotic behavior of the solutions of the Cauchy problem generated by $\phi$-accretive operators. \newblock
{\em Journal of Mathematical Analysis and Applications}, 310:594--608, 2005.

\bibitem{Ishikawa(74)} 
S.~Ishikawa. \newblock Fixed points by a new iteration method.\newblock
{\em Proceedings of the American Mathematical Society}, 44:147--150, 1974. 

\bibitem{Kato}
T.~Kato. \newblock
Nonlinear semigroups and evolution equations. \newblock
{\em Journal of the  Mathematical Society of Japan}, 19:508--520, 1967.

\bibitem{Kohlenbach(2001.0)}
U.~Kohlenbach. \newblock
A quantitative version of a theorem due to Borwein-Reich-Shafrir. \newblock
\emph{Numer. Funct. Anal. and Optimiz.}, 22:641--656, 2001.


\bibitem{Kohlenbach(book)} U.~Kohlenbach. \newblock
Applied Proof Theory: 
Proof Interpretations and their Use in Mathematics. 
Springer Monographs in Mathematics. xx+536pp., Springer Heidelberg-Berlin, 
2008. 

\bibitem{Kohlenbach(ICM)}
U.~Kohlenbach. \newblock 
Proof-theoretic Methods in Nonlinear Analysis. In: {\em 
Proc. ICM 2018, B. Sirakov, P. Ney de Souza, M. Viana (eds.)}, Vol. 2, pp. 
61-82. World Scientific 2019.


\bibitem{Kohlenbach(PPA)}
U.~Kohlenbach. \newblock Quantitative results on the {P}roximal {P}oint 
{A}lgorithm in
uniformly convex Banach spaces.
\newblock To appear in: {\em Journal of Convex Analysis}.


\bibitem{KohKou(2015.0)}
U.~Kohlenbach and A.~Koutsoukou-Argyraki.
\newblock Rates of convergence and metastability for abstract {C}auchy problems
  generated by accretive operators.
\newblock {\em Journal of Mathematical Analysis and Applications}, 
423:1089--1112, 2015.



\bibitem{KohLeu(2012.1)}
U.~Kohlenbach and L.~Leu\c{s}tean.
\newblock On the computational content of convergence proofs via {B}anach
  limits.
\newblock {\em Philosophical Transactions of the Royal Society A},
  370:3449--3463, 2012.

\bibitem{KohlenbachLopezNicolae}
U.~Kohlenbach, G.~L\'opez-Acedo, A.~Nicolae. \newblock
Moduli of regularity and rates of convergence for Fej\'er monotone 
sequences. \newblock {\it Israel Journal of Mathematics}, 232:261-297, 2019. 

\bibitem{KoeKoh(2011.0)}
D.~K\"{o}rnlein and U.~Kohlenbach.
\newblock Effective rates of convergence for {L}ipschitzian pseudocontractive
  mappings in general {B}anach spaces.
\newblock {\em Nonlinear Analysis}, 74:5253--5267, 2011.

\bibitem{Komura}
Y.~Komura. \newblock
Nonlinear semi-groups in Hilbert space. \newblock
J. Math. Soc. Japan, 19: 493--507, 1967.

\bibitem{Koutsoukou} 
A.~Koutsoukou-Argyraki. \newblock Effective rates of 
convergence for the resolvents of accretive operators. 
\newblock {\em Numer. Funct. Anal. Optimiz.}, 28:1601--1613, 2017. 

\bibitem{LeuNiSip} 
L.~Leu\c{s}tean, A.~Nicolae, A.~Sipo\c{s}. \newblock
An abstract proximal point algorithm. \newblock {\em 
Journal of Global Optimization}, 72:553--577, 2018.

\bibitem{Lin(2004.0)}
W.~Lin.
\newblock The iterative approximation of solutions to system of multi-valued
  nonlinear operator equations.
\newblock {\em Journal of Applied Analysis}, 10(2):303--309, 2004.

\bibitem{MooNno(2001.0)}
C.~Moore and B.~V.~C. Nnoli.
\newblock Iterative solution of nonlinear equations involving set-valued
  uniformly accretive operators.
\newblock {\em Computers and Mathematics with Applications}, 42(1--2):131--140,
  2001.

\bibitem{Mar70}
B.~Martinet. \newblock R\'{e}gularisation d'in\'{e}quations variationnelles par approximations successives. \newblock
{\em Rev. Fran\c{c}aise Informat. Recherche Op\'{e}rationnelle}, 4:154--158, 
1970.

\bibitem{Neumann} 
E.~Neumann. \newblock 
Computational problems in metric fixed point theory and their Weihrauch 
degrees. \newblock  
  {\em Logical Methods in Computer Science}, 11, 44pp., 2015.

\bibitem{NevanlinnaReich}
O.~Nevanlinna, S.~Reich. \newblock Strong convergence
of contraction semigroups and of iterative methods for accretive operators in
Banach spaces. \newblock {\em Israel Journal of Mathematics}, 32:44--58, 1979.


\bibitem{Reich(1978)}
S.~Reich. \newblock An iterative procedure for
constructing zeros of accretive sets in Banach spaces.
\newblock  {\em Nonlinear Analysis}, 2:85--92, 1978.

\bibitem{Reich(1980)}
S.~Reich. \newblock Product formulas, nonlinear
semigroups, and accretive operators. \newblock {\em 
Journal of Functional Analysis}, 36:147--168, 1980.

\bibitem{Roc76}
R.T.~Rockafellar. \newblock 
Monotone operators and the proximal point algorithm. 
\newblock {\em SIAM J. Control Optim.}, 14:877--898, 1976.
\end{thebibliography}
\end{document}